\let\old@setaddresses\@setaddresses
\def\@setaddresses{\bigskip\bgroup\parindent 0pt\let\scshape\relax\old@setaddresses\egroup}
\newtheorem{theorem}{Theorem}[section]
\newtheorem{proposition}[theorem]{Proposition}
\newtheorem{lemma}[theorem]{Lemma}
\newtheorem{question}[theorem]{Question}
\newcommand{\eps}{\varepsilon}
\DeclareMathOperator{\N}{\mathbb{N}}
\let\le\leqslant
\let\ge\geqslant
\let\leq\leqslant
\let\geq\geqslant
\begin{document}

\title{Isometric universal graphs}

\author[L.~Esperet]{Louis Esperet}
\address[L.~Esperet]{Laboratoire G-SCOP (CNRS, Univ.\ Grenoble Alpes), Grenoble, France}
\email{louis.esperet@grenoble-inp.fr}

\author[C.~Gavoille]{Cyril Gavoille}
\address[C.~Gavoille]{LaBRI, University of Bordeaux, Bordeaux, France}
\email{gavoille@labri.fr}

\author[C. Groenland]{Carla Groenland}
\address[C. Groenland]{Utrecht University, Utrecht, The Netherlands}
\email{c.e.groenland@uu.nl}

\thanks{L.\ Esperet is partially supported by the French ANR Projects GATO (ANR-16-CE40-0009-01), GrR (ANR-18-CE40-0032), and by LabEx PERSYVAL-lab (ANR-11-LABX-0025). C.\ Gavoille was partially supported by the French ANR projects DESCARTES (ANR-16-CE40-0023) and DISTANCIA (ANR-17-CE40-0015).
C.\ Groenland is partially supported by the project CRACKNP that has received funding from the European Research Council (ERC) under the European Union’s Horizon 2020 research and innovation programme (grant agreement No 853234). }

\date{\today}
\sloppy

\begin{abstract}
  A subgraph $H$ of a graph $G$ is isometric if the
  distances between vertices in $H$ coincide with the distances
  between the corresponding vertices in $G$.
We show that for any integer $n\ge 1$, there is a graph on
$3^{n+O(\log^2 n)}$ vertices that contains  all $n$-vertex
graphs as isometric subgraphs. Our main tool is a new type of distance labelling scheme, whose study might be of independent interest.
%To prove this result, we introduce a new type of labelling scheme,
%called \emph{distance-vector labelling scheme}, which seems to be of
%independent interest.
\end{abstract}

\maketitle

\section{Introduction}\label{sec:intro}

\subsection{Universal graphs}

A graph $H$ is said to be \emph{induced-universal} for a graph class
$\mathcal{C}$ if $H$ contains all graphs $G\in \mathcal{G}$ as induced
subgraphs. Let $\mathcal{G}$ denote the class of all graphs, and
$\mathcal{G}_n$ denote the class of all $n$-vertex graphs. It was proved by Moon~\cite{moon65} in 1965 that $\mathcal{G}_n$ has an induced-universal graph with $n\cdot 2^{n/2}$
vertices, and that any induced-universal graph for $\mathcal{G}_n$ must contain at least $2^{(n-1)/2}$
vertices. After intermediate results by Alstrup, Kaplan, Thorup and Zwick~\cite{alstrup.kaplan.ea:adjacency}, Alon~\cite{Alon17} recently
proved that $\mathcal{G}_n$  has an induced-universal graph with
$(1+o(1))2^{(n-1)/2}$ vertices, showing that the lower bound of Moon (which follows
from a simple counting argument) can be attained, up to a lower order
term.

A stronger notion of induced-universal graph is the following: we say that $H$
is an \emph{isometric-universal} graph for a class $\mathcal{C}$ if
$H$ contains isometric copies of all graphs $G\in \mathcal{C}$, where
a subgraph $G$ of $H$ is \emph{isometric} if the distances between
vertices of $G$ are the same in $G$ and $H$: for any $u,v$ in $V(G)\subseteq
V(H)$, $d_G(u,v)=d_H(u,v)$ (where $d_G(u,v)$ denotes the distance
between $u$ and $v$ in $G$). Note that an isometric copy of a graph
$G$ in a graph $H$ is an \emph{induced copy} of $G$ in $H$, as two
vertices are adjacent in a graph if and only if they are at distance
1 in this graph. This implies that any isometric-universal graph for
a class $\mathcal{C}$ is also induced-universal for $\mathcal{C}$. It
turns out that the property of being isometric-universal is
significantly stronger than the property of being
induced-universal. For instance,
Bollob\'as and Thomason~\cite{BT81} proved that the random graph
$G(N,\tfrac12)$ with $N=n^2\cdot 2^{n/2}$ is almost surely induced-universal
for $\mathcal{G}_n$, but since it has diameter 2 almost surely, $G(N,\tfrac12)$  only contains graphs of diameter at most 2 as isometric subgraphs.

%(for any constant
%$c>0$, in the regime where $G(N,p)$ with $N=2^{cn}$ contains a clique of size
%$n=\Omega(\log N)$, the graph $G(N,p)$ has diameter 2 and thus can
%only contain graphs of diameter at most 2 as isometric subgraphs).

\medskip

The
following natural question was recently raised by Peter Winkler
(personal communication).

\begin{question}\label{q:winkler}
Is there a constant $c>1$ such  that the class $\mathcal{G}_n$ of all $n$-vertex graphs has an isometric-universal graph on at most $c^{n}$ vertices?
\end{question}

The main result of the present note is a positive answer to
\cref{q:winkler}, for any $c>3$.

%\begin{theorem}\label{thm:main}
%The class $\mathcal{G}_n$ has an isometric-universal graph on at most $5^{n+O(\log^2 n)}$ vertices.
%\end{theorem}

\begin{theorem}\label{thm:main2}
For any integer $n\ge 0$, the class $\mathcal{G}_n$ of all $n$-vertex graphs has an isometric-universal graph on at most $3^{n+O(\log^2 n)}$ vertices.
\end{theorem}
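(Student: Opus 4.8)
The plan is to build one graph $H$ on $3^{n+O(\log^2 n)}$ vertices into which every \emph{connected} $n$-vertex graph embeds isometrically; \cref{thm:main2} then follows by a routine reduction, placing a copy of $H$ in each of $n$ disjoint ``slots'' so that an $n$-vertex graph with components $G_1,\dots,G_m$ ($m\le n$) is embedded component by component, which multiplies the vertex count only by a factor $n=3^{O(\log n)}$. It is convenient to view $V(H)$ as a set of \emph{labels} (concretely: ternary strings of length $n+O(\log^2 n)$) and to view an isometric embedding of $G$ as a map $v\mapsto \ell_G(v)\in V(H)$ with $d_H(\ell_G(u),\ell_G(v))=d_G(u,v)$. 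So the task is to design a distance labelling scheme for connected $n$-vertex graphs \emph{together with} a graph $H$ on the label set that realises the decoder, i.e.\ such that $d_H$ restricted to the labels of any fixed $G$ is $d_G$. This is the ``new type of distance labelling scheme'' hinted at in the abstract: an ordinary scheme only requires a decoding \emph{function}, whereas here the decoder must literally be the distance function of an explicit, not-too-large graph, and the whole point is to build the scheme and the graph simultaneously.

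For the labels I would fix a root $r$ of $G$, take a BFS tree $T$ rooted at $r$ with vertices $w_1=r,w_2,\dots,w_n$ in BFS order and parent map $p$, and record for each vertex $v$ the value $d_G(v,r)$ (only $O(\log n)$ bits) together with the $n-1$ \emph{trits} $t^v_i:=d_G(v,w_i)-d_G(v,p(w_i))\in\{-1,0,1\}$. These are well defined since $w_i\sim p(w_i)$, and together with $d_G(v,r)$ they determine the whole distance vector of $v$, by telescoping along root--leaf paths of $T$. The key observation is that $d_G(u,v)$ can then be read off from the trits of $u$ alone, provided one knows the chain $r=a_0,a_1,\dots,a_k=v$ of $T$-ancestors of $v$, because $d_G(u,v)=d_G(u,r)+\sum_{\ell=1}^{k}t^u_{a_\ell}$. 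Hence the label of $v$ will consist of its $n-1$ trits, the $O(\log n)$ bits for $d_G(v,r)$, and the list of indices of its $T$-ancestors (whose last entry identifies $v$); the decoder reads the ancestor list out of $\ell_G(v)$ and evaluates that sum on the trits in $\ell_G(u)$. Everything except the ancestor list fits in $(n-1)\log_2 3+O(\log n)$ bits; the ancestor list costs $O(\mathrm{depth}_T(v)\cdot\log n)$ bits, which is the harmless $O(\log^2 n)$ when $\mathrm{diam}(G)=O(\log n)$ but can be $\Theta(n\log n)$ in general. I would remove this defect by a hierarchical version of the construction along the BFS layering of $G$: a single BFS layer separates $G$, and recursing through the $O(\log n)$ ``scales'' replaces the one long ancestor list by a hierarchy of short ones, costing $O(\log n)$ per scale and $O(\log^2 n)$ overall --- which is exactly where the $O(\log^2 n)$ in the exponent comes from, the main term being the $n-1$ trits.

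The step I expect to be genuinely delicate is realising this decoder by an honest graph $H$ without creating spurious shortcuts. If one naively declares two labels adjacent exactly when the decoder returns $1$, then --- since the decoder above is neither symmetric nor subadditive on pairs of labels coming from \emph{different} graphs --- $H$ might contain a path between $\ell_G(u)$ and $\ell_G(v)$ of length less than $d_G(u,v)$ that wanders through labels of other graphs. The way around this is to define $H$ as a graph in the first place, so that $d_H$ is automatically a metric and no symmetrisation issue arises, and then to prove the single remaining statement: that $v\mapsto\ell_G(v)$ is isometric for every connected $n$-vertex $G$, i.e.\ that such shortcuts do not occur. What should make this possible is the strictly local nature of the trit encoding: along a shortest $u$--$v$ path in $G$, the trit-strings change one coordinate at a time, each by $0$ or $\pm1$, so the images of the vertices of a single $G$ sit inside a natural host graph on ternary strings as a geodesically closed set. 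Concretely I would define $H$ directly on the ternary strings (with the hierarchical ancestor data folded in), with edges chosen so that (i) every connected $n$-vertex graph embeds isometrically and (ii) $|V(H)|\le 3^{n+O(\log^2 n)}$. Establishing (i) --- controlling how a shortest path of $H$ can traverse labels of different graphs --- is the crux of the argument, and the BFS-tree-with-trits scheme, in its hierarchical form, is precisely what is engineered to make (i) hold with the stated bound on $|V(H)|$.
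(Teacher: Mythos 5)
Your high-level architecture matches the paper's: reduce to connected graphs by taking a disjoint union of $n$ copies of the host graph, encode distances by increments in $\{-1,0,1\}$ along a spanning tree (this is where the $n\log 3$ main term comes from), spend $O(\log^2 n)$ extra bits on ancestor information, and define the host graph $H$ directly on the label set so that $d_H$ is a metric by construction. But the step you yourself flag as ``the crux'' --- defining $E(H)$ and proving that no shortest path in $H$ shortcuts through labels of other graphs --- is exactly the step you have not supplied, and it is where essentially all of the work lies; asserting that the scheme ``is precisely what is engineered to make (i) hold'' is a restatement of the theorem, not a proof. The paper's resolution (\cref{lem:td2}) is to have each vertex store only its distances to its \emph{ancestors} in a hierarchical decomposition, to restrict $V(H)$ to labels whose last distance entry is $0$, and to join two labels exactly when one ancestor list is a prefix of the other and the distance vectors are at $L^\infty$-pseudodistance $1$. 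The no-shortcut argument then pivots on the label $z_j$ of minimal ancestor-list length along a path in $H$ from $\ell(u)$ to $\ell(v)$: its ancestor list is a common prefix of all the others, the corresponding vertex $w$ is a common ancestor of $u$ and $v$ with $d_G(u,w)\le j$ and $d_G(w,v)\le t-j$ by telescoping the $L^\infty$ bounds, whence $d_G(u,v)\le t$. Nothing in your write-up substitutes for this argument.

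Two further choices in your sketch actively obstruct carrying it out. First, you use a BFS tree, but any adjacency rule on $H$ that compares ancestor lists (as it must, for the argument above) requires that every edge of $G$ join a vertex to one of its tree-ancestors; DFS trees have this property and BFS trees do not, so adjacent vertices of $G$ could have incomparable ancestor lists and fail to be adjacent in $H$. Second, your fix for the $\Theta(n\log n)$-bit ancestor list --- ``recursing along the BFS layering'' --- is not the right tool (a BFS layer need be neither small nor balanced, and it is unclear what the recursion is); the paper instead encodes the root-to-$v$ path of the DFS tree via a heavy-path-type ordering (\cref{lem:heavy}), under which that path splits into $O(\log n)$ runs of consecutive indices, so $O(\log^2 n)$ bits suffice. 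Relatedly, your $n-1$ trits $t^v_i$ relative to the parent map do not let a decoder recover the distance vector from $\ell_G(v)$ alone, since telescoping requires knowing $T$; storing increments only along the ancestor chain of $v$ itself, as the paper does, is both sufficient and exactly the data that the host graph's adjacency rule consumes.
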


We prove \cref{thm:main2} by studying a new type of
labelling scheme, as we explain next.

\subsection{Labelling schemes}
For a set $S$, and an integer $k\ge 0$, the $k$-fold Cartesian product
$S\times \dots \times S$ is denoted by $S^k$. We write $S^{\le k}=\bigcup_{i=0}^k S^i$ and 
$S^*=\bigcup_{i=0}^\infty S^i$ (i.e.\ $S^*$ denotes the set of finite sequences of elements of $S$, or equivalently, the set of finite words, or strings, on the alphabet $S$). For instance
$\{0,1\}^*$
 denotes the set of finite binary strings, while $(\mathbb{N}\cup\{\infty\})^*$ denotes the set
 of finite sequences whose elements are integers or $\infty$. For a string $s\in S^*$, the length of $s$ is denoted by $|s|$. Throughout the paper, $\log{n}$ denotes the binary logarithm of $n$.  
 
An \emph{adjacency labelling
  scheme} for a graph class $\mathcal{C}$ is a function
$A:\{0,1\}^*\times \{0,1 \}^*\to \{0,1 \}$ such that for any graph $G\in
\mathcal{C}$ there is a function $\ell_G:V(G)\to
\{0,1 \}^*$ with the following property: for any pair of vertices
$u,v\in V(G)$, $A(\ell_G(u),\ell_G(v))=1$ if and only if $u$ and $v$ are
adjacent in $G$. In other words, we can tell whether $u$ and $v$ are
adjacent in $G$ by only looking at the labels $\ell_G(u)$ and $\ell_G(v)$.
Note that the function $A$ depends on
$\mathcal{C}$ (and not on a specific graph $G\in \mathcal{C}$). We say that the adjacency labelling scheme has \emph{labels of at most $k$ bits} if $|\ell_G(v)|\le k$ for
any $G\in \mathcal{C}$ and $v\in V(G)$.

\medskip

%Given
%a class $\mathcal{C}$, we denote by $\mathcal{C}_n$ the class of
%$n$-vertex graphs of $\mathcal{C}$. 

%If
%there is a function $f$ such that $|\ell_G(v)|\le f(n)$ for
%any $G\in \mathcal{C}_n$ and $v\in V(G)$,  then we say that $\mathcal{C}$ admits an \emph{adjacency labelling
%  scheme with labels of at most $f(n)$ bits}. 
Kannan, Naor, and
Rudich~\cite{kannan.naor.ea:implicit0,kannan.naor.ea:implicit} noticed
the following connection between adjacency labelling schemes and
induced-universal graphs.

\begin{theorem}[\cite{kannan.naor.ea:implicit0,kannan.naor.ea:implicit}]\label{thm:knr}
  For any integer $k\ge 0$, a class $\mathcal{C}$ has an adjacency labelling scheme with labels
  of at most $k$ bits if and only if  $\mathcal{C}$ has an induced-universal
  graph with at most $2^{k+1}-1$ vertices.
\end{theorem}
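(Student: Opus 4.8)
The plan is to prove the two implications by directly translating between vertex labels and vertices of a universal graph, the only quantitative input being that the set $\{0,1\}^{\le k}$ of binary strings of length at most $k$ has exactly $\sum_{i=0}^{k}2^i=2^{k+1}-1$ elements (which is why one works with strings of length \emph{at most} $k$, rather than length exactly $k$). For the implication from a labelling scheme to a universal graph, suppose $\mathcal{C}$ has an adjacency labelling scheme $A$ with labels of at most $k$ bits, the label functions $\ell_G$ being injective (an assumption that, as explained at the end, cannot be dropped). Since adjacency in any graph is symmetric, $A(\ell_G(u),\ell_G(v))=A(\ell_G(v),\ell_G(u))$ for all $u,v$, so replacing $A(x,y)$ by $A(x,y)\wedge A(y,x)$ gives a valid scheme with the same label functions; thus I may assume $A$ is symmetric. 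I then let $U$ be the graph on vertex set $\{0,1\}^{\le k}$ in which two distinct strings $x,y$ are adjacent iff $A(x,y)=1$; this is a well-defined simple graph on $2^{k+1}-1$ vertices. For any $G\in\mathcal{C}$ the injective map $\ell_G$ is an isomorphism from $G$ onto the subgraph of $U$ induced on $\ell_G(V(G))$, since for distinct $u,v\in V(G)$ one has $\ell_G(u)\ne\ell_G(v)$ and $uv\in E(G)\iff A(\ell_G(u),\ell_G(v))=1\iff\ell_G(u)\ell_G(v)\in E(U)$. Hence $U$ is induced-universal for $\mathcal{C}$ with at most $2^{k+1}-1$ vertices.

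For the reverse implication, suppose $\mathcal{C}$ has an induced-universal graph $H$ with $m\le 2^{k+1}-1$ vertices. As $m$ is at most the number of strings in $\{0,1\}^{\le k}$, I fix an injection $\sigma\colon V(H)\to\{0,1\}^{\le k}$ and define the decoder $A$ so that $A(x,y)=1$ exactly when $x$ and $y$ both lie in the image of $\sigma$ and $\sigma^{-1}(x)\sigma^{-1}(y)\in E(H)$. Given $G\in\mathcal{C}$, I fix an induced copy of $G$ in $H$, that is, an injective map $\phi\colon V(G)\to V(H)$ with $uv\in E(G)\iff\phi(u)\phi(v)\in E(H)$, and set $\ell_G:=\sigma\circ\phi$. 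Then $\ell_G$ is injective with labels of length at most $k$, and $A(\ell_G(u),\ell_G(v))=1\iff\phi(u)\phi(v)\in E(H)\iff uv\in E(G)$, so $A$ together with the $\ell_G$ is an adjacency labelling scheme for $\mathcal{C}$ with labels of at most $k$ bits.

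I do not expect a real obstacle here, since the proof is essentially a change of description; the points that need attention are getting the constant exactly right (hence strings of length at most $k$), the symmetrisation of $A$ (necessary because $A$ is constrained only on pairs of strings that genuinely occur as labels, so a priori it need not be symmetric, whereas the universal graph read off from it must be), and the use of pairwise distinct labels within each $\ell_G$. Without this last assumption the implication from a labelling scheme to a bounded-size universal graph already fails — for instance, the class of all edgeless graphs has no finite induced-universal graph but would, without distinctness, possess a trivial $0$-bit scheme — so injectivity of $\ell_G$ is genuinely part of the hypothesis.
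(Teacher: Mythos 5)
Your proof is correct and follows essentially the same route as the paper: build the universal graph on vertex set $\{0,1\}^{\le k}$ with adjacency read off from $A$, and conversely read off a decoder from an identification of $V(H)$ with a subset of $\{0,1\}^{\le k}$. The extra care you take about symmetrising $A$ and about injectivity of the $\ell_G$ (which the paper's definition leaves implicit but which, as your edgeless-graphs example shows, is genuinely needed) is a sensible tightening of the same argument rather than a different approach.
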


The equivalence is proved as follows. Given an
adjacency labelling scheme $A$ with labels
  of at most $k$ bits, we define an induced-universal graph $H$
  with vertex-set $\{0,1\}^{\le k}$ by connecting any pair of vertices
  $a,b$ by an edge in $H$ if and only if $A(a,b)=1$. For any graph
  $G\in \mathcal{C}$, the
  labelling function $\ell_G:V(G)\to \{0,1\}^{\le k}$ gives a natural
  embedding of $G$ into $H$, and it easily follows from the definition
  of $A$ that the image of $G$ by $\ell_G$ in $H$ is an induced copy of
  $G$ in $H$. Conversely, given an induced-universal graph $H$ for 
  $\mathcal{C}$ with $|V(H)|\le 2^{k+1}-1$
  vertices, we can identify $V(H)$ with (a subset of) $\{0,1\}^{\le k}$, and
  define $A(a,b)=1$ if and only if $a$ and $b$ exist and are adjacent
  in $H$. For any  graph $G\in\mathcal{C}$, any embedding
  of $G$ as an induced copy in $H$ naturally defines a labelling
  $\ell_G:V(G)\to V(H)\subseteq\{0,1\}^{\le k}$ such that for any $u,v\in V(G)$,
  $A(\ell_G(u),\ell_G(v))=1$ if and only if $u$ and $v$ are adjacent in
  $G$.
  
  \smallskip

  Adjacency labelling schemes have been the main tool to construct
  induced-universal graphs with few vertices~\cite{Alon17,
    alstrup.kaplan.ea:adjacency,bonamy.gavoille.ea:shorter,AdjacencyLabellingPlanarFOCS,gavoille.labourel:shorter,kannan.naor.ea:implicit0,kannan.naor.ea:implicit}. As
  a consequence,
  a natural attempt to answer \cref{q:winkler}
  would be to find a type of labelling
  scheme that would be equivalent to isometric-universal graphs. A
  natural candidate is the notion of distance labelling scheme,
  introduced by Gavoille, Peleg, Pérennes and Raz in~\cite{GPPR04} (inspired by the work of Graham and Pollak~\cite{GP72} in 1972, see also~\cite{Winkler83}),
  and further studied
  in~\cite{alstrup.dahlgaard.ea:sublinear,alstrup.gortz.ea:distance,alstrup.gavoille.ea:simpler,alstrup.bille.ea:labeling,GKU16,GU16}. A \emph{distance labelling
  scheme} for a graph class $\mathcal{C}$ is a function
$B:\{0,1\}^*\times \{0,1 \}^*\to \mathbb{N}\cup\{\infty\}$ such that for any graph $G\in
\mathcal{C}$ there is a labelling
function $\ell_G:V(G)\to
\{0,1 \}^*$ with the following property: for any pair of vertices
$u,v\in V(G)$, $B(\ell_G(u),\ell_G(v))=d_G(u,v)$. In other words, we can
determine the distance between $u$ and $v$ in $G$  using only the labels $\ell_G(u)$ and $\ell_G(v)$. As before, if
there is an integer $k\ge 0$ such that $|\ell_G(v)|\le k$ for
any graph $G\in \mathcal{C}$ and $v\in V(G)$, then we say that $\mathcal{C}$ admits an \emph{distance labelling
  scheme with labels of at most $k$ bits}.

Note that a distance labelling scheme tells us in particular whether two
vertices are at distance 1 (equivalently, if they are adjacent), and
thus a distance labelling scheme is also an adjacency labelling scheme.
On the other hand, we have the following partial analogue of
\cref{thm:knr}.

\begin{lemma}\label{lem:distknr}
If a class $\mathcal{C}$ has  an isometric-universal
  graph with at most $2^{k+1}-1$ vertices, for some integer $k\ge 0$, then $\mathcal{C}$ has a distance labelling scheme with labels
  of at most $k$ bits.
\end{lemma}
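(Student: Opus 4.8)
The plan is to mimic the easy direction of \cref{thm:knr}, but keep track of distances rather than adjacency. Suppose $\mathcal{C}$ has an isometric-universal graph $H$ with $|V(H)| \le 2^{k+1}-1$. Since $\{0,1\}^{\le k}$ has exactly $2^{k+1}-1$ elements, I would fix an injection $\iota : V(H) \to \{0,1\}^{\le k}$, so that every vertex of $H$ is named by a binary string of length at most $k$. Using this identification I define the decoding function $B : \{0,1\}^* \times \{0,1\}^* \to \N \cup \{\infty\}$ by setting $B(a,b) = d_H(x,y)$ whenever $a = \iota(x)$ and $b = \iota(y)$ for vertices $x,y \in V(H)$ (with $d_H$ taken to be $\infty$ if $x,y$ lie in different components), and $B(a,b)$ arbitrary (say $\infty$) otherwise. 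Note that $B$ depends only on $H$ and $\iota$, hence only on $\mathcal{C}$, as required.

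Next I would verify the labelling property. Fix $G \in \mathcal{C}$. Because $H$ is isometric-universal for $\mathcal{C}$, there is an embedding $\varphi : V(G) \to V(H)$ whose image is an isometric copy of $G$ in $H$, i.e.\ $d_H(\varphi(u),\varphi(v)) = d_G(u,v)$ for all $u,v \in V(G)$. Define $\ell_G = \iota \circ \varphi : V(G) \to \{0,1\}^{\le k}$. Then for any $u,v \in V(G)$ we have $\ell_G(u) = \iota(\varphi(u))$ and $\ell_G(v) = \iota(\varphi(v))$, so by the definition of $B$, $B(\ell_G(u),\ell_G(v)) = d_H(\varphi(u),\varphi(v)) = d_G(u,v)$. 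Moreover $|\ell_G(v)| \le k$ for every $v$, so this is a distance labelling scheme with labels of at most $k$ bits.

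There is essentially no obstacle here: the only mild subtlety is making sure $B$ is well defined, i.e.\ that a single string $a \in \{0,1\}^{\le k}$ is the name of at most one vertex of $H$, which is exactly why we need $\iota$ to be injective (and this is possible precisely because $|V(H)| \le 2^{k+1}-1 = |\{0,1\}^{\le k}|$). One should also note the asymmetry with \cref{thm:knr}: the converse implication fails (or at least is not claimed), since a distance labelling scheme with $k$-bit labels only guarantees an induced-universal graph of the stated size via \cref{thm:knr}, not an isometric-universal one — the decoding function $B$ need not be realized as the distance function of any single graph on $\{0,1\}^{\le k}$. This is why the lemma is stated as a one-directional ``partial analogue''.
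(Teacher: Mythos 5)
Your proof is correct and follows essentially the same route as the paper: identify $V(H)$ with a subset of $\{0,1\}^{\le k}$, set $B(a,b)=d_H(a,b)$, and let $\ell_G$ be the composition of an isometric embedding of $G$ into $H$ with this identification. The only difference is that you spell out the injectivity of the identification and the cardinality count $|\{0,1\}^{\le k}|=2^{k+1}-1$, which the paper leaves implicit.
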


\begin{proof}
As above, given an isometric-universal
  graph $H$  with at most $2^{k+1}-1$ vertices for $\mathcal{C}$, we define a distance labelling
  scheme $B$ for $\mathcal{C}$ as follows. We identify the vertex set
  of $H$ with (a subset of) $\{0,1\}^{\le k}$, and for any graph $G\in
  \mathcal{C}$ we consider an isometric embedding $\ell_G:V(G)\to V(H)\subseteq \{0,1\}^{\le k}$ of $G$ in $H$. Given $a,b\in V(H)\subseteq \{0,1\}^{\le k}$, we
  simply define $B(a,b)=d_H(a,b)$. It follows from the definition of
  an isometric-universal graph that the distance between $u$ and $v$
  in a graph $G\in \mathcal{C}$ coincides with the distance between
  $\ell_G(u)$ and $\ell_G(v)$ in $H$, so $B$ is indeed a distance
  labelling scheme for $\mathcal{C}$, with labels of at most $k$ bits.
\end{proof}

Interestingly, in this case the connection between labelling schemes
and universal graphs does not go in both directions: distance labelling
schemes cannot be automatically converted into isometric-universal
graphs\footnote{On the other hand, distance labelling schemes can be converted into small universal distance matrices in a natural way, see~\cite{GP03}.}. For instance, the distance labelling scheme of
Winkler~\cite{Winkler83} (see also~\cite{GPPR04}) leads to a graph with constant
diameter, so it can only contain isometric copies of graphs with constant
diameter. 

\medskip

In \cref{sec:distancevector} we define a new type of labelling scheme,
called \emph{distance-vector labelling scheme}, and prove that having such a
scheme with labels of $k$ bits implies the existence of
isometric-universal graphs with $2^k$ vertices. We then show how to
obtain distance-vector labelling schemes with labels of
$O(n)$ bits for all $n$-vertex graphs, which directly implies a positive answer to \cref{q:winkler}. We also explore the limitations of this approach. In \cref{sec:proof2} we prove \cref{thm:main2}. The proof does not use distance-vector labelling schemes but a slightly more technical variant. The generality of the proof also allows us to deduce improved bounds on the size of isometric-universal graphs for families with sublinear separators, such as planar graphs or more generally graphs avoiding some fixed minor. We conclude with some open problems in \cref{sec:ccl}. 

\section{Distance-vector labelling schemes}\label{sec:distancevector}

A \emph{distance-vector labelling
  scheme} for a graph class $\mathcal{C}$ is a function
$D:\{0,1\}^*\to (\mathbb{N}\cup\{\infty\})^*$ such that for any graph $G\in
\mathcal{C}$ there is an ordering $v_1,v_2,\ldots,v_n$ of the vertices of
$G$ and a function $\ell_G:V(G)\to
\{0,1 \}^*$ with the following property: for any vertex $v\in V(G)$,
$D(\ell_G(v))=(d_G(v,v_1), d_G(v,v_2),\ldots, d_G(v,v_n))$. In other words, we can
determine the distance in $G$ between $v$ and each vertex of $G$  by only looking at
the label $\ell_G(v)$. As before, if
there is an integer $k\ge 0$ such that
$|\ell_G(v)|\le k$ for
any graph $G\in \mathcal{C}$ and $v\in V(G)$, then we say that $\mathcal{C}$ admits a \emph{distance-vector labelling
  scheme with labels of at most $k$ bits}.

\smallskip

We  note that contrary to adjacency labelling schemes and distance
labelling schemes, in distance-vector labelling schemes the function
$D$ has a single parameter. 
  
  % A second remark is that in the  proof of the main result, the
  % sequence $v_1,\ldots,v_h$ will be a permutation of $V(G)$ (each
  % vertex will appear exactly once in the sequence). However, it might be the case that allowing
  % repetitions yields more efficient labelling schemes, so we have
  % written the definition above in full generality.

  \medskip

  We start by observing that any distance-vector labelling
  scheme can be translated into a distance labelling scheme with labels
  of the same size. 

  \begin{proposition}\label{obs:dvd}
Let $\mathcal{C}$ be a graph class with a distance-vector labelling
scheme with labels of at most $k$ bits, for some integer $k\ge 0$. Then $\mathcal{C}$ has a distance labelling
scheme with labels of at most $k$ bits.
   \end{proposition}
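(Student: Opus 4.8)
The plan is to show that a distance-vector labelling scheme $D$ for $\mathcal{C}$ can be used more or less directly as a distance labelling scheme, using the same labels $\ell_G$. The key observation is that the ordering $v_1,\ldots,v_n$ witnessing the distance-vector scheme is the \emph{same} ordering for all vertices of a fixed graph $G$, so the entry of the distance vector that records $d_G(v, v_i)$ sits in a consistent position across the labels of all vertices of $G$. Thus if we want to recover $d_G(u,v)$ from $\ell_G(u)$ and $\ell_G(v)$, we need a way to identify, from the two labels, an index $i$ such that $v = v_i$; then the $i$-th coordinate of $D(\ell_G(u))$ is the desired distance.

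The main subtlety — and the step I expect to be the main obstacle — is that the labels $\ell_G(v)$ are not required to encode the index of $v$ in the ordering, and a priori two distinct vertices could even receive the same label (the definition of a distance-vector labelling scheme does not forbid this). To handle this cleanly, I would first argue that we may assume without loss of generality that the labels are pairwise distinct within each graph $G$: if some vertices share a label, then they have the same distance vector, hence in particular (looking at the coordinate corresponding to themselves) they are all at distance $0$ from each other, which is impossible unless they coincide. Hence $\ell_G$ is injective on $V(G)$. Moreover, for each $v = v_i$, the $i$-th coordinate of $D(\ell_G(v))$ equals $d_G(v,v_i) = 0$, and for $j \ne i$ the $j$-th coordinate equals $d_G(v_i, v_j) > 0$; so the position of the unique $0$ in the vector $D(\ell_G(v))$ is exactly the index $i$ of $v$ in the ordering. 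This gives us a way to read off the index of any vertex from its own label alone.

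Concretely, then, I would define the distance labelling scheme $B : \{0,1\}^* \times \{0,1\}^* \to \mathbb{N} \cup \{\infty\}$ as follows. Given two labels $a$ and $b$, compute the vectors $D(a)$ and $D(b)$; if $D(b)$ contains a coordinate equal to $0$, say at position $j$ (if there is more than one such coordinate, or none, the input is not a valid pair of labels and we may return anything, e.g.\ $\infty$), then set $B(a,b)$ to be the $j$-th coordinate of $D(a)$, provided $D(a)$ has at least $j$ coordinates (otherwise return $\infty$). For a graph $G \in \mathcal{C}$ we use the very same labelling function $\ell_G$ as in the distance-vector scheme; by the discussion above, for $u, v \in V(G)$ with $v = v_j$, the vector $D(\ell_G(v))$ has its unique $0$ in position $j$, and $D(\ell_G(u)) = (d_G(u,v_1),\ldots,d_G(u,v_n))$ has its $j$-th coordinate equal to $d_G(u,v_j) = d_G(u,v)$, so $B(\ell_G(u),\ell_G(v)) = d_G(u,v)$ as required. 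Since $\ell_G$ is unchanged, the label length is still at most $k$ bits, which completes the proof.
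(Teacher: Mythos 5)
Your proposal is correct and uses essentially the same argument as the paper: locate the unique zero entry in one vertex's distance vector to recover its index in the ordering, then read off the corresponding entry of the other vertex's vector, reusing the original labels unchanged. The extra care you take about injectivity of labels and ill-formed inputs is fine but not needed beyond what the paper's one-line observation already covers.
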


  \begin{proof}
Let $D$ be a distance-vector labelling
scheme for $\mathcal{C}$ with labels of at most $k$ bits. Consider
a graph $G\in \mathcal{C}$ and let $v_1,\ldots,v_n$ be the associated
ordering of the vertices of $G$, and let $\ell_G: V(G)\to
\{0,1 \}^*$ be the associated labelling function. We now define a
distance labelling scheme $B$ for $\mathcal{C}$. We keep the same
labelling functions $(\ell_G)_{G\in \mathcal{C}}$. For two vertices
$u,v \in G$, we start by considering
$D(\ell_G(u))=(d_G(u,v_1),\ldots,d_G(u,v_n))$ and
$D(\ell_G(v))=(d_G(v,v_1),\ldots,d_G(v,v_n))$. In the first sequence,
the unique index $i$ such that $d_G(u,v_i)=0$ is such that $u=v_i$, so we can
find $d_G(v,v_i)=d_G(v,u)$ in the second sequence. This shows how
to obtain $d_G(u,v)$ from $\ell_G(u)$ and $\ell_G(v)$. So the
implicitly defined function $B(\ell_G(u),\ell_G(v))$ is indeed a
distance labelling scheme for $\mathcal{C}$ with labels of at most $k$ bits, as desired.
  \end{proof}
  
For every vector $x=(x_i)_{i=1}^n\in (\mathbb{N}\cup\{\infty\})^n$, let $\|x\|_\infty=\max_{i=1}^n|x_i|\in \mathbb{N}\cup\{\infty\}$. Adopting the convention that $\infty-\infty=0$, we observe that $(x,y)\mapsto \|x-y\|_\infty$ defines a distance in $(\mathbb{N}\cup\{\infty\})^n$. We now prove that the existence of distance-vector labelling schemes
with small labels
implies the existence of small isometric-universal graphs. 

\begin{lemma}\label{lem:dvlsug}
If a graph class $\mathcal{C}$ has a distance-vector labelling
scheme with labels of at most $k$ bits, for some integer $k\ge 0$, then   $\mathcal{C}$ has an
isometric-universal graph with at most $2^{k+1}-1$ vertices.
\end{lemma}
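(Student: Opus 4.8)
The plan is to build the isometric-universal graph $H$ directly from the distance-vector labelling scheme $D$, mimicking the construction in the proof of \cref{thm:knr} but using distances rather than adjacency. Concretely, let the vertex set of $H$ be $\{0,1\}^{\le k}$ (which has exactly $2^{k+1}-1$ elements), and for two labels $a,b\in\{0,1\}^{\le k}$ put an edge between $a$ and $b$ in $H$ precisely when $D(a)$ and $D(b)$ are both defined, have the same length $n$, and satisfy $\|D(a)-D(b)\|_\infty = 1$. (If $D(a)$ is undefined or the two vectors have different lengths, we leave $a$ and $b$ non-adjacent; such vertices will be irrelevant.) The intuition is that $D(\ell_G(v))$ records the ``coordinates'' of $v$ as the vector of its distances to a fixed basis $v_1,\dots,v_n$, and in a graph the distance between two vertices can change by at most $1$ along any edge, so the $\ell_\infty$-distance between these coordinate vectors is a natural candidate for $d_H$.

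The first step is to check that for each $G\in\mathcal{C}$, the map $\ell_G:V(G)\to\{0,1\}^{\le k}$ is an \emph{isometric} embedding into $H$. Fix $G$ with vertex ordering $v_1,\dots,v_n$ and set $f(v)=\ell_G(v)$; by definition $D(f(v))=(d_G(v,v_1),\dots,d_G(v,v_n))$. One direction is the triangle inequality in $G$: for any $u,v$ and any $i$, $|d_G(u,v_i)-d_G(v,v_i)|\le d_G(u,v)$, so $\|D(f(u))-D(f(v))\|_\infty \le d_G(u,v)$, and hence along any path the $\ell_\infty$-distance grows by at most the path length, giving $d_H(f(u),f(v))\ge \|D(f(u))-D(f(v))\|_\infty$ once we know edges of $H$ have $\ell_\infty$-gap exactly $1$ — more precisely, $d_H(f(u),f(v)) \ge \|D(f(u))-D(f(v))\|_\infty$. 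Wait — I need the inequality the other way for the lower bound on $d_H$; let me restate: since every edge of $H$ changes each coordinate by at most $1$ in absolute value (gap exactly $1$ in the $\ell_\infty$ norm, hence at most $1$ in each coordinate), along any $H$-path of length $t$ from $f(u)$ to $f(v)$ each coordinate changes by at most $t$, so $t\ge\|D(f(u))-D(f(v))\|_\infty$; combined with the $G$-triangle-inequality bound $\|D(f(u))-D(f(v))\|_\infty\le d_G(u,v)$ this does \emph{not} immediately close the loop, so I will instead exhibit a short path.

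The heart of the argument is therefore producing, for every $u,v\in V(G)$, a walk in $H$ from $f(u)$ to $f(v)$ of length exactly $d_G(u,v)$: take a shortest path $u=w_0,w_1,\dots,w_d=v$ in $G$ (with $d=d_G(u,v)$), and observe that each $f(w_{j})f(w_{j+1})$ is an edge of $H$ — indeed $w_j,w_{j+1}$ are adjacent in $G$, so $|d_G(w_j,v_i)-d_G(w_{j+1},v_i)|\le 1$ for all $i$, and the coordinate $i$ with $v_i=w_j$ witnesses that the gap is exactly $1$ (it is $0$ at $w_j$ and $1$ at $w_{j+1}$, since $w_{j+1}\neq w_j$ and adjacency gives distance exactly $1$). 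Hence $f$ maps shortest paths to walks of the same length, proving $d_H(f(u),f(v))\le d_G(u,v)$; together with the triangle-inequality lower bound $d_H(f(u),f(v))\ge\|D(f(u))-D(f(v))\|_\infty$ — and noting that the $i$ with $v_i=u$ gives $\|D(f(u))-D(f(v))\|_\infty \ge |d_G(u,v_i)-d_G(v,v_i)| = d_G(u,v)$ — we get equality $d_H(f(u),f(v))=d_G(u,v)$. Finally I should check $f$ is injective: if $u\ne v$ then the coordinate $i$ with $v_i=u$ has $d_G(u,v_i)=0\ne d_G(v,v_i)$, so $D(f(u))\ne D(f(v))$ and hence $f(u)\ne f(v)$; this confirms $G$ embeds as a subgraph, and the distance equality upgrades it to an isometric subgraph. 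The main obstacle is purely the bookkeeping in the lower bound $d_H\ge d_G$: one must be careful that it is the presence of the ``zero coordinate'' for each vertex in its own distance vector — guaranteed by $D(\ell_G(v))$ listing $d_G(v,v_j)$ with $v_j$ ranging over \emph{all} of $V(G)$ — that simultaneously forces edges of $H$ to have unit coordinate-gaps and forces the $\ell_\infty$ distance to recover $d_G$; beyond that the proof is routine.
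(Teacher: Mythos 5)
Your proposal is correct and follows essentially the same route as the paper's proof: the same graph $H$ on $\{0,1\}^{\le k}$ with edges given by equal-length distance vectors at $L^\infty$-distance $1$, the upper bound $d_H\le d_G$ by showing each edge of $G$ maps to an edge of $H$ via the triangle inequality plus the ``own coordinate'' witness, and the lower bound by combining $d_H\ge\|D(\cdot)-D(\cdot)\|_\infty$ along $H$-paths with the coordinate indexed by $v$ itself. The only cosmetic difference is the mid-proof detour where you momentarily reverse an inequality before correcting yourself; the final argument is sound.
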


\begin{proof}
Let $D$ be a distance-vector labelling
scheme for $\mathcal{C}$ with labels of at most $k$ bits. 
Let $H$
be the graph with vertex set $\{0,1\}^{\le k}$, where two vertices
$a,b\in \{0,1\}^{\le k}$ are adjacent in $H$ if and only
if $D(a)$ and $D(b)$ have the same length and $\|D(a)-D(b)\|_\infty= 1$.

%, where $\|(x_i)_{i=1}^n\|_\infty=\max_{i=1}^n|x_i|$. 

Consider two vertices
$a,b\in V(H)=\{0,1\}^{\le k}$ lying in the same connected component of
$H$, and let $a_0,a_1,\ldots,a_t$ be a shortest path between $a=a_0$
and $b=a_t$ in $H$. Note that all the vectors $D(a_i)$, for $0\le i
\le t$, have the same length. Moreover, for any $1\le i \le t$, $\|D(a_{i-1})-D(a_i)\|_\infty= 1$ and thus it follows from the triangle inequality that 
\[
\|D(a)-D(b)\|_\infty\le \sum_{i=1}^t \|D(a_{i-1})-D(a_i)\|_\infty =t =d_H(a,b).
\]

Consider
a graph $G\in \mathcal{C}$ and let $v_1,\ldots,v_n$ be the
associated sequence of 
vertices of $G$, and let  $\ell_G: V(G)\to
\{0,1 \}^{\le k}=V(H)$ be the associated labelling function.

We now prove that $\ell_G$ maps $G$ to an isometric copy of $G$
in $H$.
By definition,
$D(\ell_G(v))=(d_G(v,v_1),\ldots, d_G(v,v_n))$ for any vertex $v\in
V(G)$. 
If $uv\in E(G)$, then $u\neq v$ so
$\|D(\ell_G(u))-D(\ell_G(v))\|_{\infty}\geq 1$. Moreover,
$|d_G(u,v_i)-d_G(v,v_i)|\leq d_G(u,v)=1$ by the triangle inequality
for all $i$, and thus $\|D(\ell_G(u))-D(\ell_G(v))\|_{\infty}=
1$. Hence we find that $G$ embeds as a subgraph of $H$ via
$\ell_G$, and thus $d_H(\ell_G(u),\ell_G(v))\le d_G(u,v)$ for any $u,v\in V(G)$. We now prove that for all $u,v\in V(G)$, any path between
$\ell_G(u)$ and $\ell_G(v)$ in $H$ has length at least $d_G(u,v)$,
which shows that $G$ is an isometric subgraph of $H$.

%Let $1\le i,j\le h$ be indices such that $u=v_i$ and $v=v_j$. We know
%that the $j$-th entry of the vector $D(\ell_G(v_j))$ is
%$d_G(v_j,v_j)=0$, while the $j$-th entry of the vector $D(\ell_G(v_i))$ is  $d_G(v_i,v_j)$ and so
%\[
%d_H(\ell_G(v_i), \ell_G(v_j))\ge\|D(\ell_G(v_i))-D(\ell_G(v_j))\|_\infty\ge d_G(v_i,v_j).
%\]
%This shows that $d_H(\ell_G(u),\ell_G(v))\ge  d_G(u,v)$, and thus
%$d_H(\ell_G(u),\ell_G(v))= d_G(u,v)$, as desired.

Let $1\le i,j\le n$ be indices such that $u=v_i$ and $v=v_j$. We know
that the $j$-th entry $D(\ell_G(v))_j$ of the vector $D(\ell_G(v))$ is
equal to $d_G(v,v_j)=d_G(v,v)=0$, while the $j$-th entry $D(\ell_G(u))_j$ of the vector $D(\ell_G(u))$ is equal to $d_G(u,v_j)=d_G(u,v)$ and so
\[
\|D(\ell_G(u))-D(\ell_G(v))\|_\infty\ge |D(\ell_G(u))_j-D(\ell_G(v))_j|= d_G(u,v).
\]
This shows that $d_H(\ell_G(u),\ell_G(v))\ge \|D(\ell_G(u))-D(\ell_G(v))\|_\infty\ge  d_G(u,v)$. 
%Since $G$ is an induced subgraph of $H$, we also find $d_H(\ell_G(u),\ell_G(v))\le d_G(u,v)$, giving the desired equality.
\end{proof}

We now show how to produce distance-vector labelling schemes with small
labels. It will be convenient to restrict ourselves to connected
graphs, but as the next proposition shows, we will not lose much
generality by doing so.

\begin{proposition}\label{obs:connected}
Assume that for some integer $n\ge
1$, the class of connected graphs with at most $n$
vertices has an
isometric-universal graph $G_n$ with at most $g(n)$ vertices. Then the class $\mathcal{G}_n$ of all $n$-vertex graphs has an
isometric-universal graph $H_n$ with at most $n\cdot g(n)$ vertices. 
  \end{proposition}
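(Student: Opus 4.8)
The plan is to build $H_n$ from $n$ disjoint copies of $G_n$, one for each possible number of connected components, glued together along a shared ``spine'' of dominating vertices so that the pairwise distances between distinct components become exactly $2$. More precisely, for each $k \in \{1,\dots,n\}$ I would take a copy $G_n^{(k)}$ of $G_n$, add $k$ new vertices $s^{(k)}_1,\dots,s^{(k)}_k$, and join each $s^{(k)}_j$ to every vertex of $G_n^{(k)}$ and to every other $s^{(k')}_{j'}$; the vertices $s^{(k)}_j$ across all $k$ thus form a clique that dominates everything. Actually, a cleaner construction: for each $k$ add a single new apex vertex $a_k$ adjacent to all of $G_n^{(k)}$, and make all the apices $a_1,\dots,a_n$ pairwise adjacent. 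Then $|V(H_n)| \le n\cdot g(n) + n$, which is at most $n\cdot(g(n)+1)$; to land exactly at $n\cdot g(n)$ one should instead reuse existing vertices or absorb the apex count into a slightly more careful bookkeeping — I expect one can simply take $G_n$ to already be connected and dominated by a vertex (or observe $g(n)\ge 1$ lets us fold the $+n$ in), so I will not belabor the constant here.

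The key steps, in order, are: (1) Given an $n$-vertex graph $G$ with connected components $C_1,\dots,C_k$ (so $k\le n$), embed each $C_i$ isometrically into the copy $G_n^{(k)}$ — this is possible since $C_i$ is connected with at most $n$ vertices and $G_n$ (hence $G_n^{(k)}$) is isometric-universal for that class; map all of $G$ into the single block attached to apex $a_k$. (2) Verify this is an induced embedding: no new edges appear, since distinct $C_i$ land in $G_n^{(k)}$ but are placed on a universal graph that may have spurious edges between them — this is the subtle point, addressed below. (3) Verify isometry: for $u,v$ in the same component $C_i$, the distance is preserved because the embedding of $C_i$ into $G_n^{(k)}$ is isometric and no shortcut through $a_k$ or other components can help (a path through $a_k$ has length $\ge 2$, and within $G_n^{(k)}$ the copy of $C_i$ is isometric); for $u,v$ in different components, $d_G(u,v)=\infty$ while in $H_n$ the path $u - a_k - v$ gives distance $2$, which does \emph{not} match $\infty$.

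The main obstacle is exactly this last point: two vertices in different components of $G$ are at distance $\infty$, but any apex-based gluing puts them at finite distance. So the construction must keep the components of $G$ in \emph{different} connected components of $H_n$ — contradicting the use of a shared apex. The fix is to not glue at all across the "block index" used for a single graph $G$, but to use the index $k$ only to \emph{choose which disjoint copy of $G_n$ to route into}, while routing different components of the \emph{same} $G$ into different, mutually non-adjacent sub-parts. Concretely: inside $G_n^{(k)}$, a single connected graph embeds isometrically; for a graph $G$ with $k$ components we instead need $k$ pairwise-distance-$\infty$ connected pieces, so we should take $H_n$ to be the disjoint union $G_n^{(1)} \sqcup G_n^{(2)} \sqcup \dots \sqcup G_n^{(n)}$ with \emph{no} edges added, and embed $C_i$ into $G_n^{(i)}$ (so the $i$-th component goes to the $i$-th copy). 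Distinct copies are in distinct components of $H_n$, giving $d_{H_n}=\infty$ between them, matching $G$; and within $G_n^{(i)}$ the embedding of the single connected graph $C_i$ is isometric by hypothesis. Since $k\le n$, at most $n$ copies are needed, so $|V(H_n)| = \sum_{i=1}^n |V(G_n)| = n\cdot g(n)$. I would present this disjoint-union construction as the proof, with steps (1) isometric embedding of each component into its own copy, (2) triviality of the induced/isometry check within a copy, and (3) the observation that cross-copy distances are $\infty$ on both sides.
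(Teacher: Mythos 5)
Your final construction---the disjoint union of $n$ copies of $G_n$, sending the $i$-th connected component of $G$ into the $i$-th copy---is exactly the paper's proof, and it is correct. The apex-gluing detour at the start is a dead end for precisely the reason you identify (cross-component distances must remain $\infty$), so you could simply delete it and present the disjoint-union argument directly.
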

% \textcolor{blue}{[Strictly speaking, we could take $\mathcal{C}$ to be the class of disconnected graphs, in which case the above is false?]}\textcolor{red}{right :-) In the remainder we only use this observation when $\mathcal{C}=\mathcal{G}$, so I think it's safe to only state the observation in this case}

To see this, it suffices to define $H_n$ as the disjoint union of $n$
copies of $G_n$. Clearly, each of the (at most $n$) connected components
of any  graph $G\in \mathcal{G}_n$ embeds as an isometric subgraph in a
different copy of $G_n$ in $H_n$, and the resulting embedding is an isometric
embedding of $G$ in $H_n$. 

Note that we could be more precise here: when $g(n)=c^n$ for some $c>0$,  the bound $n \cdot g(n)$ in \cref{obs:connected} can be replaced by $(1+o(1))\cdot g(n)$, by considering isometric-universal graphs for connected graphs of size $n, n/2, n/3, \ldots ,1$ instead (assuming such isometric-universal graphs exist for all these values). However this would not change the lower order terms in our constructions, so we prefer to use the simpler bound $n \cdot g(n)$.

\medskip

We start with a simple distance-vector labelling scheme with labels of at most 
$(4+o(1))n$ bits (leading to an isometric-universal graph of
$(16+o(1))^n$ vertices for the class $\mathcal{G}_n$).
The proof follows the lines of the proof of~\cite[Lemma 2.2]{GKU16} for distance labelling schemes; we include it for the convenience of the reader and since our analysis is slightly simpler due to the fact that we have no requirements on the decoding time. With the additional arguments from \cite{GKU16}, constant decoding time could be achieved if desired. Moreover, we can improve the $4n$ above to $3n$ by adapting the proof of the follow-up paper \cite{alstrup.gavoille.ea:simpler}. 

\begin{theorem}\label{thm:4n}
For any integer $n\ge 1$, the class of all connected  $n$-vertex graphs has a distance-vector labelling
scheme with labels of at most $4n+O(\log n)$ bits.
\end{theorem}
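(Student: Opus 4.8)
The plan is to give an explicit scheme built from a spanning tree. Fix a connected graph $G$ on $n$ vertices, choose an arbitrary spanning tree $T$ of $G$, root it at an arbitrary vertex $r$, and fix an arbitrary linear order on $V(G)$ so that $T$ becomes an \emph{ordered} rooted tree; let $v_1=r,v_2,\dots,v_n$ be the vertices of $G$ in the DFS pre-order of this ordered tree. This is the ordering that will witness the scheme, and it depends only on $G$, so it is common to all vertices of $G$. The label $\ell_G(v)$ of a vertex $v$ is the concatenation of: (a) the standard balanced-parenthesis encoding of the shape of the ordered rooted tree $T$, of length at most $2n$ bits; (b) the binary encoding of the integer $d_G(v,r)\in\{0,\dots,n-1\}$, of length $\lceil\log n\rceil$ bits; and (c) for $i=2,\dots,n$, taken in this order, a fixed $2$-bit encoding of $\delta_i(v):=d_G(v,v_i)-d_G(v,\mathrm{parent}_T(v_i))$, of total length $2(n-1)$ bits. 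Part (a) is self-delimiting and determines $n$, so the three parts parse unambiguously, and the label has length $4n+O(\log n)$. Note that part (a) does not depend on $v$, so all vertices of $G$ carry the same tree encoding.

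First I would check that part (c) makes sense: for every vertex $v$ and every edge $xy$ of $T$ the triangle inequality gives $|d_G(v,x)-d_G(v,y)|\le d_G(x,y)=1$, using that $T$ is a spanning \emph{subgraph} of $G$ and that $G$ is connected, so that all these distances are finite integers; hence $\delta_i(v)\in\{-1,0,1\}$ and two bits suffice. Next I would describe the decoding function $D$: given a label, reconstruct the ordered rooted tree $T$ from part (a) (the labeler wrote the children of each node in the order induced by the fixed linear order on $V(G)$, and the decoder reads them left to right, so both obtain the same ordered tree), and with it the pre-order $v_1,\dots,v_n$ and the map $\mathrm{parent}_T$; read $d_G(v,r)$ from part (b) and the $\delta_i$ from part (c); set $f(v_1):=d_G(v,r)$ and, for $i=2,\dots,n$ in increasing order, $f(v_i):=f(\mathrm{parent}_T(v_i))+\delta_i$. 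This recursion is well founded because in a DFS pre-order the parent of $v_i$ is some $v_j$ with $j<i$. Finally $D$ outputs $(f(v_1),\dots,f(v_n))$. A one-line induction on $i$ along the pre-order (base case from part (b), inductive step by unfolding the definition of $\delta_i$) shows $f(v_i)=d_G(v,v_i)$ for all $i$, so $D(\ell_G(v))=(d_G(v,v_1),\dots,d_G(v,v_n))$. Since $D$ does not depend on $G$, this is the claimed distance-vector labelling scheme with labels of at most $4n+O(\log n)$ bits.

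I do not expect a genuine obstacle here; the argument is essentially bookkeeping, and the only points needing care are (i) fixing conventions so that the decoder recovers, from every label of a given graph, one and the same vertex ordering --- handled by letting $D$ rebuild the ordered tree and take its canonical pre-order --- and (ii) the bit count, where it is the two blocks of size $\approx 2n$, one to describe the spanning tree and one for the $n-1$ ternary increments stored with two bits each, that produce the leading constant $4$. As the surrounding text notes, storing the increments in $\lceil(n-1)\log 3\rceil$ bits and merging this more carefully with the tree description, following \cite{alstrup.gavoille.ea:simpler}, lowers the leading constant to $3$, while succinct tree data structures as in \cite{GKU16} would in addition make each coordinate of $D(\ell_G(v))$ computable in constant time; neither refinement is needed for the statement as given.
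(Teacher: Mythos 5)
Your proof is correct, but it takes a noticeably different route from the paper's. The paper also starts from a spanning tree $T$, but it orders the vertices by first appearance along an Euler tour of the doubled tree, so that $\sum_{i=2}^n d_G(v_{i-1},v_i)\le 2n$; each vertex then stores $d_G(v,v_1)$ plus the increments $d_G(v,v_i)-d_G(v,v_{i-1})$ between \emph{consecutive} vertices of the ordering. These increments can individually be large, but their absolute values sum to at most $2n$, so they are encoded with $n-1$ sign bits plus a $3n$-bit unary-style encoding of the magnitudes, giving $4n+O(\log n)$ with no need to describe $T$ in the label. You instead take increments relative to the tree \emph{parent}, which are each in $\{-1,0,1\}$ and cost only $2(n-1)$ bits, but then the decoder must know the parent function, so you spend a further $2n$ bits on a balanced-parenthesis encoding of the ordered tree (correctly observing that this component is vertex-independent and self-delimiting, and that the DFS pre-order reconstructed by the decoder matches the labeler's ordering). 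Both budgets land at $4n+O(\log n)$. Your version is arguably closer in spirit to the paper's later hierarchical scheme (Theorem 3.8), where increments along tree ancestors are also in $\{-1,0,1\}$; and if you replaced your $2$-bit-per-increment encoding by a block encoding of the ternary string in $\lceil (n-1)\log 3\rceil$ bits, you would even get $(2+\log 3)n+O(\log n)\approx 3.59n$, slightly below the paper's constant. The paper's Euler-tour version has the advantage that the label carries no description of the tree at all, which is the feature exploited in the follow-up improvements it cites.
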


%\begin{proof}
%Let $G$ be a connected $n$-vertex graph, and let $v_1,\ldots,v_h$ be a
%walk in $G$ containing all the vertices of $G$. It is well known that
%we can find such a walk with $m\le 2n-2$ vertices (consider any spanning
%tree $T$ of $G$, double every edge of $T$, note that the resulting graph is Eulerian
%and that any Euler tour gives the desired walk). For any vertex $v\in G$, we encode the
%distances $d_G(v,v_i)$ as follows: we start by recording $d_G(v,v_1)$, and
%for any $2\le i \le h$ we simply record
%$\delta_i=d_G(v,v_i)-d_G(v,v_{i-1})$. Note that this information is enough to
%recover all the entries of the vector $(d_G(v,v_i) )_{1\le i \le h}$. There are $n$ possible values
%for $d_G(v,v_1)$, and it follows from the triangle inequality that for
%any $2\le i \le h$, $\delta_i\in \{-1,0,1\}$. As a consequence,
%$(d_G(v,v_i) )_{1\le i \le h}$ can be encoded with at most $n\cdot
%3^{2n-2}$ bits, as desired.
%\textcolor{red}{refer to the danish paper for the encoding bounds? do
%  we really want to  talk about encoding subtleties? it depends where
%  we send the paper, CS conf or math journal} \textcolor{blue}{I agree.}
%\end{proof}

\begin{proof}
Let $G$ be a connected $n$-vertex graph. 
It is well known that there is a tour visiting all vertices of $G$ that uses at most $2n$ edges. Indeed, consider any spanning tree $T$ of $G$, double every edge of $T$ and note that the resulting graph is Eulerian; the corresponding Eulerian walk gives the desired tour. 
In particular, if we order the vertices $v_1,\dots,v_n$ according to their first appearance in the tour (fixing an arbitrary starting vertex $v_1$) then 
\[
d_G(v_1,v_2)+\dots +d_G(v_{n-1},v_n)\leq 2n.
\]
For any vertex $v\in G$, in order to encode the
distances $d_G(v,v_i)$, for all $i=1,\dots,n$, it is sufficient to record $d_G(v,v_1)$, and
for any $2\le i \le n$, 
$\delta_i=d_G(v,v_i)-d_G(v,v_{i-1})$. From the triangle inequality, we find that
\[
\sum_{i=2}^n|d_G(v,v_i)-d_G(v,v_{i-1})|\leq \sum_{i=2}^n d_G(v_i,v_{i-1})\leq 2n.
\]
We use $n-1$ bits to store the signs of $\delta_2,\dots,\delta_n$. For their absolute values, we note that there is a simple bijection between sequences of integers $b_1,\dots,b_{n-1}\geq 0$ satisfying $\sum_{i=1}^{n-1} b_i \leq 2n$ and binary sequences of length at most $3n$ with exactly $n$ 1's (it suffices to write a 1 followed by $b_i$ 0's, for each $i=1,\ldots,n$ in order).
%equals the number of zeros between the $i$-th and $(i+1)$-th $1$ in the sequence. (For example,  $100100001100100$ corresponds to $(2,4,0,2)$.) Such sequences use at most $3n$ bits. 
In total, we use at most $n-1+3n+O(\log n)=4n+O(\log n)$ bits, where we used a further $\lceil \log n\rceil$ bits in order to record $d_G(v,v_1)$.
\end{proof}

Note that the bound $4n$ above can easily be optimized in several different ways, but here we chose to present a simplest possible proof instead.
\cref{thm:4n} directly implies the following exponential upper bound on the size of an isometric-universal graph for $\mathcal{G}_n$, providing a positive answer to \cref{q:winkler}.

\begin{theorem}
For any integer $n\ge 1$, the class $\mathcal{G}_n$ of all $n$-vertex graphs has an isometric-universal graph on at most $16^{n+O(\log n)}$ vertices.
\end{theorem}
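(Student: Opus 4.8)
The plan is to combine the distance-vector labelling scheme of \cref{thm:4n} with the two structural lemmas already established. By \cref{thm:4n}, the class of connected $n$-vertex graphs has a distance-vector labelling scheme with labels of at most $k = 4n + O(\log n)$ bits. Applying \cref{lem:dvlsug} to this scheme yields an isometric-universal graph for connected $n$-vertex graphs on at most $2^{k+1}-1 \le 2^{4n + O(\log n)} = 16^{n + O(\log n)}$ vertices; call it $G_n$, so $g(n) = 16^{n+O(\log n)}$.

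Next I would remove the connectivity restriction using \cref{obs:connected}: the class $\mathcal{G}_n$ of all $n$-vertex graphs has an isometric-universal graph $H_n$ on at most $n \cdot g(n)$ vertices. Since $n \cdot 16^{n+O(\log n)} = 16^{n + O(\log n)}$ (the factor $n$ is absorbed into the $O(\log n)$ term in the exponent, as $n = 16^{(\log n)/4}$), this gives the claimed bound of $16^{n+O(\log n)}$ vertices and hence a positive answer to \cref{q:winkler} with any $c > 16$.

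Since every ingredient — \cref{thm:4n}, \cref{lem:dvlsug}, and \cref{obs:connected} — has already been proved, there is no real obstacle here; the statement is an immediate corollary and the only thing to check is the bookkeeping of the $O(\cdot)$ terms in the exponent. The mild subtlety worth spelling out is that all multiplicative factors of the form $\poly(n)$ appearing along the way (the $+1$ and $-1$ in $2^{k+1}-1$, and the factor $n$ from \cref{obs:connected}) contribute only $O(\log n)$ to the exponent of the base $16$, so they are harmless; this is exactly why the weaker but cleaner bound in \cref{obs:connected} suffices rather than the refined $(1+o(1))g(n)$ estimate mentioned in the remark following it.
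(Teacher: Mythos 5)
Your proposal is correct and follows exactly the paper's own route: \cref{thm:4n} plus \cref{lem:dvlsug} for connected $n$-vertex graphs, then \cref{obs:connected} to handle disconnected graphs, with the polynomial factors absorbed into the $O(\log n)$ in the exponent. The only detail you skip is that \cref{obs:connected} requires an isometric-universal graph for connected graphs on \emph{at most} $n$ vertices, so one should add the observation that every connected graph on at most $n$ vertices embeds isometrically into some connected $n$-vertex graph, whence $G_n$ works for that larger class as well.
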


\begin{proof}
Let $n\ge 1$ be an integer.  \cref{thm:4n} and \cref{lem:dvlsug} imply that the class of
  connected $n$-vertex graphs has an isometric-universal graph
  $G_n$ with at most $2^{4n+O(\log n)}$ vertices. Since any connected graph of at most $n$ vertices is an isometric subgraph
of some connected $n$-vertex graph, $G_n$ is isometric-universal for
the class of connected graph with at most $n$ vertices. By
\cref{obs:connected}, this shows that the class $\mathcal{G}_n$
has an isometric-universal graph with at most $n\cdot 2^{4n+O(\log n)}=16^{n+O(\log n)}$
vertices, as desired.
\end{proof}

A natural problem is to determine the smallest constant $c>0$ such that the class $\mathcal{G}_n$ has a distance-vector labelling scheme with labels of at most $cn$ bits. While simple counting arguments show that adjacency labelling schemes for $\mathcal{G}_n$ require labels of $(n-1)/2$ bits~\cite{moon65}, the unary nature of distance-vector labelling scheme allows us to show that in our case, $c\ge 1$ is the natural lower bound.

\begin{theorem}
Any distance-vector labelling scheme for the class $\mathcal{G}_n$ of all $n$-vertex graphs  needs labels of at least $(1-o(1))n$ bits.
\end{theorem}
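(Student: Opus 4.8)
The plan is to prove the slightly stronger bound that every distance-vector labelling scheme for $\mathcal{G}_n$ requires labels of at least $n-2$ bits, which is $(1-o(1))n$. I will do this by exhibiting $2^{n-2}$ graphs in $\mathcal{G}_n$, each carrying a distinguished vertex, so that the distinguished vertices of distinct graphs are forced to receive distinct labels. Since every label is a binary string of length at most $k$ and there are only $2^{k+1}-1$ such strings, this yields $2^{k+1}-1\ge 2^{n-2}$, hence $k\ge n-2$.

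The first step isolates the feature of distance-vector schemes that makes the unary nature relevant. Let $D$ be the decoding function. If $\ell_G(u)=\ell_{G'}(u')$ for vertices $u\in V(G)$ and $u'\in V(G')$ of two graphs in $\mathcal{G}_n$, then applying $D$ to this common label returns, on the one hand, the vector $(d_G(u,v_1),\dots,d_G(u,v_n))$ of distances from $u$ (in the ordering chosen for $G$), and on the other hand the analogous vector for $u'$ in $G'$; as these two length-$n$ vectors are equal, their multisets of entries coincide. Consequently the label of a vertex $v$ of a graph $G$ determines the \emph{distance profile} of $v$, namely the map $d\mapsto\#\{w\in V(G):d_G(v,w)=d\}$; equivalently, two vertices (possibly in different graphs) with different distance profiles must receive different labels. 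This is exactly where distance-vector schemes are more constrained than ordinary distance labelling schemes, for which a single label need not reveal anything of the sort.

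The second step is to produce many realizable distance profiles on $n$ vertices. Given a composition $(m_1,\dots,m_D)$ of $n-1$ into positive parts, let $G$ be the $n$-vertex graph consisting of a path $v^\ast=p_0,p_1,\dots,p_D$ together with $m_j-1$ extra pendant vertices attached to $p_{j-1}$ for each $j\in\{1,\dots,D\}$. Then $v^\ast$ has exactly $m_j$ vertices at distance $j$ for every $j\in\{1,\dots,D\}$ and none farther, so the distance profile of $v^\ast$ recovers the composition; in particular distinct compositions give rise to graphs whose distinguished vertices have distinct distance profiles. Since there are $2^{n-2}$ compositions of $n-1$ into positive parts, this gives the required $2^{n-2}$ graphs.

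Combining the two steps, the $2^{n-2}$ labels assigned by the scheme to the distinguished vertices are pairwise distinct binary strings of length at most $k$, so $2^{k+1}-1\ge 2^{n-2}$ and $k\ge n-2$. I do not expect a genuine obstacle in carrying this out; the only point requiring care is the first step — recognizing that, in a distance-vector scheme, a single label must encode the entire unordered distance profile of its vertex — and this is what lets a simple ``profile-counting'' argument beat the roughly $n/2$ bound that one would get from merely counting $n$-vertex graphs. (If desired, allowing in addition a bounded number of vertices at distance $\infty$ from $v^\ast$ raises the count of realizable profiles to $2^{n-1}$ and hence the bound to $k\ge n-1$, but this refinement is not needed.)
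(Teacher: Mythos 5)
Your proof is correct, and it takes a genuinely different route from the paper's. The paper also exploits the unary nature of the scheme, but in a different way: it counts $n$-vertex bipartite graphs with parts of sizes $\epsilon n$ and $(1-\epsilon)n$, and observes that concatenating the labels of the $\epsilon n$ vertices of the small part already reconstructs the whole graph (each label reveals the index of its vertex, as the unique coordinate equal to $0$, and its neighbourhood, as the set of coordinates equal to $1$); comparing with the number $2^{\epsilon(1-\epsilon)n^2}/n!$ of isomorphism types gives $\epsilon n\cdot f(n)\ge \epsilon(1-\epsilon)n^2-n\log n$, hence $f(n)\ge (1-\epsilon)n-\epsilon^{-1}\log n$ for every fixed $\epsilon$. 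Your argument instead isolates the fact that, because the decoder $D$ is a single-parameter function shared by the entire class, a label alone determines the unordered multiset of distances from its vertex; you then realize $2^{n-2}$ distinct distance profiles via caterpillars indexed by compositions of $n-1$, forcing $2^{k+1}-1\ge 2^{n-2}$ and so $k\ge n-2$. The details check out: each caterpillar has exactly $D+1+\sum_j(m_j-1)=n$ vertices, the distinguished vertex has exactly $m_j$ vertices at distance $j$, and distinct compositions give distinct profiles. Your approach buys a cleaner argument (no optimization over $\epsilon$, no counting of isomorphism types) and a sharper explicit constant, $k\ge n-2$ versus the $n-O(\sqrt{n\log n})$ obtained by optimizing $\epsilon$ in the paper's bound; the paper's approach has the merit of reusing the generic ``concatenate a few labels to encode the graph'' paradigm that also underlies lower bounds for adjacency and distance labelling.
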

\begin{proof}
Let $0<\epsilon<\frac12$ and $n\in \N$. Suppose for convenience that $\epsilon n$ is an integer. Consider the family $\mathcal{B}_n$ of $n$-vertex bipartite graphs with a part of size $\epsilon n$ and another part of size $(1-\epsilon)n$. Since the complete bipartite graph in $\mathcal{B}_n$ contains $\epsilon n\cdot (1-\epsilon)n=\epsilon(1-\epsilon)n^2$ edges,
there are at least 
\[
\frac{2^{\epsilon(1-\epsilon)n^2}}{n!}\geq 2^{\epsilon (1-\epsilon)n^2-n\log n}
\]
isomorphism types in $\mathcal{B}_n$. 

Suppose we have a distance-vector labelling with labels of at most $f(n)$ bits for $\mathcal{G}_n$. Since $\mathcal{B}_n\subset \mathcal{G}_n$, we can use this scheme to encode the graphs in $\mathcal{B}_n$ as follows. Given $G\in \mathcal{B}_n$, there is an ordering $v_1,\dots,v_n$ of the vertices such that each vertex has a label of at most $f(n)$ bits from which we can decode the distances to all the vertices. Consider the binary string obtained by concatenating the labels of the vertices of the partite set of size $\epsilon n$. This binary string has size at most $\eps n\cdot f(n)$, and it can be observed that it is enough to reconstruct (an isomorphic copy of) $G$. Indeed, the labels telling the distances tell in particular the index of each vertex (the unique vertex at distance 0) and the neighbors of each vertex (the set of vertices at distance 1).
This shows that
\[
\epsilon n \cdot f(n) \geq \epsilon(1-\epsilon)n^2-n\log n,
\]
which implies that $f(n)\geq (1-\epsilon)n-\epsilon^{-1}\log n$.
%\textcolor{blue}{I think that we need to add a binary vector of length $n$ to give the translation between the order encoded in the labels and the order of the vertices that you start with.}
\end{proof}

Together with \cref{thm:4n}, this shows that the smallest constant $c$ such that the class $\mathcal{G}_n$ has a distance-vector labelling scheme with labels of at most $c\cdot n$ bits satisfies $1\le c \le 4$ (again, we can decrease the bound 4 in  \cref{thm:4n} at the cost of a more careful analysis, but currently not beyond 3). As our main result will be proved using a different type of distance-vector labelling schemes, we do not try to obtain the best constant $c$ here (although the problem of optimizing $c$  might be interesting in its own right, see~\cref{sec:ccl}).

\medskip

In the remainder of this paper, we prove \cref{thm:main2}. As alluded to above, instead of using distance-vector labelling schemes directly, we consider a technical variant in which each vertex only records its distance to a certain subset of ancestors. On the way, we observe that distance labelling schemes constructed in~\cite{GPPR04,GU16} for graph classes with sublinear separators can be adapted to construct small isometric-universal graphs for these classes.

%We will not  use distance-vector labelling schemes explicitly, but the main idea of adding edges between distance vectors whose $L^\infty$-distance is 1 will again be crucial. \textcolor{blue}{If we want to sell the distance vectors more, we could also say that we have the improved bound for $2$-connected graphs using the distance vectors, and then use a `separators'-trick to lift it to all graphs (but I am happy to do it as you currently have it as well).}

%It turns out that if $G$ is 2-connected, we can find an order $v_1,\dots,v_n$ on the vertices of $G$ such that $d_G(v_i,v_{i+1})\in \{-2,-1,0,1,2\}$ for all $i\in [n-1]$, which leads us to the following result.

\section{Proof of \texorpdfstring{\cref{thm:main2}}{Theorem 1.2}}\label{sec:proof2}

Given a  graph $G$, assume that there is  a rooted tree $T$ and a partition $(B_t)_{t\in V(T)}$ of the vertex set of $G$ into non-empty sets  (called \emph{bags}) indexed by the nodes of $T$. Recall that the ancestors of a node $t\in T$ are the nodes lying on the unique path from the root of $T$ to $t$ in $T$ (we consider $t$ to be an ancestor of itself). Given a vertex $v\in V(G)$, and a pair $(T,(B_t)_{t\in V(T)})$ as above, let $t\in V(T)$ be such that $v\in B_t$. Then $B_t$ is called the \emph{bag of $v$} and all the bags $B_{t'}$ such that $t'$ is an ancestor of $t$ in $T$ are called the \emph{ancestor bags} of $v$ and $B_t$. 

\smallskip

A pair $(T,(B_t)_{t\in V(T)})$ as above is called a \emph{hierarchical decomposition} of $G$
if for each edge $uv\in E(G)$, $u$ lies in an ancestor bag of $v$, or vice-versa.

\smallskip

Given an ordering $v_1,v_2,\ldots,v_n$ of the vertices of a graph $G$, the \emph{$V(G)$-index} of a vertex $v\in V(G)$ is the integer $1\le j \le n$ such that $v=v_j$. Assume we have an ordering $v_1,v_2,\ldots,v_n$ of the vertices of a graph $G$, and a hierarchical decomposition $(T,(B_t)_{t\in V(T)})$ of $G$. Let $v\in V(G)$. We say that a vertex $u\in V(G)$ is an \emph{ancestor} of $v$ (with respect to the decomposition $(T,(B_t)_{t\in V(T)})$ and the ordering $v_1,\ldots,v_n$), if $u$ lies in a strict ancestor bag of $v$ (i.e.\ in an ancestor bag of $v$ distinct from the bag of $v$), or if $u$ and $v$ lie in the same bag and the $V(G)$-index of $u$ is at most the $V(G)$-index of $v$. If the decomposition and the ordering are clear from the context, we simply say that $u$ is an ancestor of $v$.

Note that for each vertex $v$, the set of ancestors of $v$ is totally ordered by the ancestor relation (as this relation is transitive, and for any two ancestors $u,w$ of $v$, one of $u,w$ is an ancestor of the other). The corresponding ordering of the ancestors of $v$ is called \emph{the natural ordering of the ancestors of v} with respect to the hierarchical decomposition $(T,(B_t)_{t\in V(T)})$ and the ordering $v_1,\ldots,v_n$ (again when the decomposition and the ordering are clear from the context we omit them in the terminology). An equivalent way to consider this ordering is the following: if the ancestor bags of $v$ are $B_{t_1},\ldots,B_{t_k}$ in order, where $t_1$ is the root of $T$ and $B_{t_k}$ is the bag of $v$, then the natural ordering of the ancestors of $v$ corresponds to enumerating, for each $i=1,\ldots,k$ in order, the vertices of $B_{t_i}$, where the vertices in each bag are sorted according to their $V(G)$-indices and for the bag $B_{t_k}$ of $v$ we only consider the vertices of $V(G)$-index at most the $V(G)$-index of $v$. Note that $v$ is always the final vertex in the natural ordering of its ancestors.

\medskip

Let $\mathcal{C}$ be a class of graphs. 
Assume that there is a decoding function $D:\{0,1\}^*\to
\mathbb{N}^*$ such that the following holds. For each $G\in \mathcal{C}$, there is an ordering $v_1,v_2,\ldots,v_n$ of the vertices of $G$, a labelling function $\ell_G:V(G)\to \{0,1\}^*$, and a hierarchical decomposition $(T_G,(B_t)_{t\in V(T_G)})$ of $G$, such that for any $v\in V(G)$, $D(\ell_G(v))=(p(v),x(v))$, where 
\begin{itemize}
    \item $p(v)\in \{1,\ldots,n\}^{\le n}$ is a vector such that for any $1\le i\le |p(v)|$, the $i$-th entry of $p(v)$ (denoted by $p(v)_i$) is the $V(G)$-index of the $i$-th vertex in the natural ordering of the ancestors of $v$.
    \item $x(v)\in \{0,\ldots,n\}^{\le n}$ is a vector with $|x(v)|=|p(v)|$, such that for any $1\le i \le |x(v)|$, the $i$-th entry of $x(v)$  is equal to $d_G(v,v_j)$, where $j=p(v)_i$.
\end{itemize}

In other words, $D(\ell_G(v))$ allows us to find the indices of the ancestors  of $v$ in the decomposition, from the root of $T_G$ to $v$, and the distances from $v$ to each of these vertices in $G$.  We call this a \emph{hierarchical distance-vector labelling scheme} for $\mathcal{C}$. As before, if $|\ell_G(v)|\le k$ for all $G\in\mathcal{C}$  and $v\in V(G)$, then we say that the scheme has labels of at most $k$ bits. %Moreover, if the spanning tree $T_G$ in the arborescent separator decomposition has height at most $h(n)$ for all $G\in\mathcal{C}_n$, then we say that the scheme has height at most $h(n)$.

\medskip

Hierarchical distance-vector labelling schemes (and distance-vector labelling schemes) can be seen as a special case of \emph{hub-labelling}, where each vertex $v$ stores its distances to some set $S_v$ of vertices, in such a way that for any two vertices $u$ and $v$, some shortest path between $u$ and $v$ intersects $S_u\cap S_v$ (see~\cite{KUV19} and the references therein).

\medskip

In the proof of the next result it will be convenient to consider distances between vectors of different lengths. We define the \emph{$L^\infty$-pseudodistance} between two vectors $x,y\in \mathbb{N}^*$ of different lengths as the $L^\infty$-distance between the prefixes  of $x$ and $y$ of length $\min(|x|,|y|)$.

\begin{lemma}\label{lem:td2}
Let $\mathcal{C}$ be a class of graphs with a hierarchical distance-vector labelling scheme with labels of at most $k$ bits, for some integer $k\ge 0$. Then  $\mathcal{C}$ has an isometric-universal graph with at most $2^{k+1}-1$ vertices.
\end{lemma}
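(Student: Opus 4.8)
The plan is to mimic the construction in \cref{lem:dvlsug}, but using the $L^\infty$-pseudodistance between the distance-vectors $x(v)$ in order to cope with labels of different lengths. Concretely, I would define a graph $H$ on vertex set $\{0,1\}^{\le k}$ by making $a,b$ adjacent whenever the $L^\infty$-pseudodistance between $x(a)$ and $x(b)$ (the second coordinates of $D(a)$ and $D(b)$) equals $1$ \emph{and} one of $p(a),p(b)$ is a prefix of the other — i.e. the ancestor sequences are compatible. This ``prefix'' condition is the key extra ingredient compared to \cref{lem:dvlsug}: it encodes the hierarchical structure and will be what forces shortest paths in $H$ between images of vertices of $G$ to stay within $G$. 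Since $|V(H)|=2^{k+1}-1$, it remains to show $H$ is isometric-universal for $\mathcal{C}$.

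The first half of the argument is the easy inequality $d_H(\ell_G(u),\ell_G(v))\le d_G(u,v)$: I would check that $\ell_G$ embeds $G$ as a subgraph of $H$. If $uv\in E(G)$, then by the definition of a hierarchical decomposition one of $u,v$ lies in an ancestor bag of the other, so one of the ancestor sequences $p(u),p(v)$ is a prefix of the other; on the common prefix the distances to the listed vertices differ by at most $d_G(u,v)=1$ by the triangle inequality, and they differ by exactly $1$ in the entry indexing $u$ (or $v$), which lies in the common prefix. Hence $\ell_G(u)\ell_G(v)\in E(H)$, and a walk in $G$ maps to a walk of the same length in $H$.

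The second, harder half is $d_H(\ell_G(u),\ell_G(v))\ge d_G(u,v)$. Take a shortest path $a_0,\dots,a_t$ in $H$ from $a_0=\ell_G(u)$ to $a_t=\ell_G(v)$. The subtlety over \cref{lem:dvlsug} is that consecutive vertices may have $p$-vectors of different lengths, so the naive telescoping of $L^\infty$ differences is not immediate. I would first argue that along the path the common-prefix relation propagates enough structure: since $v=v_j$ is its own last ancestor, the entry of $x(\ell_G(v))$ indexed by $j$ (in position $|p(\ell_G(v))|$) equals $0$, while $u$ has $j$ in its ancestor list iff $u$ is an ancestor of $v$ — which need not hold in general. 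The right move is therefore to locate a position that \emph{is} in the common prefix of $p(a_0)$ and $p(a_t)$ and on which the values of $x$ already differ by $d_G(u,v)$: namely, let $w$ be the last common ancestor of $u$ and $v$ in the decomposition (the deepest bag that is an ancestor bag of both, with the appropriate index tiebreak); then $w=v_j$ for some $j$ appearing in the common prefix of $p(\ell_G(u))$ and $p(\ell_G(v))$, and the corresponding entries are $d_G(u,w)$ and $d_G(v,w)$. Since every shortest $u$--$v$ path in $G$ passes through the bag of $w$ (as $u$ and $v$ lie in different subtrees, or one is an ancestor of the other), one gets $d_G(u,v)=d_G(u,w)+d_G(w,v)$, which is not quite what we want for $L^\infty$; instead I would use that the position $j$ lies in the common prefix of \emph{every} $p(a_i)$ along the path (this needs the prefix-compatibility of consecutive vertices plus the fact that position $j$ is ``below'' no branching), so that telescoping the honest differences $|x(a_{i-1})_j-x(a_i)_j|\le 1$ along the path yields $d_H(a,b)=t\ge |x(a_0)_j-x(a_t)_j|=|d_G(u,w)-d_G(v,w)|$ — and then I replace $w$ by the argument already used in \cref{lem:dvlsug}: pick $w=v$ itself if $v$'s index is in the common prefix, which it is precisely because $v$ appears in its own ancestor list and, crucially, in $u$'s ancestor list is not guaranteed, so the honest statement is $w$ = last vertex common to both ancestor lists, giving $t\ge d_G(u,v)$ once one checks every shortest $u$--$v$ path meets $B_w$ and $d_G(u,v)=d_G(u,w)+d_G(w,v)=x(\ell_G(u))_{\mathrm{pos}}+x(\ell_G(v))_{\mathrm{pos}}$, wait — that is a sum, not a max.

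Let me restate the crux cleanly, since that last point is the genuine obstacle: the main work is to show $t\ge d_G(u,v)$, and the clean way is (i) show that along a shortest $H$-path the ancestor vectors $p(a_i)$ are pairwise prefix-comparable on a fixed coordinate $j$ corresponding to a vertex $w=v_j$ lying in a common ancestor bag of $u$ and $v$, (ii) conclude by telescoping that $t\ge |d_G(u,w)-d_G(v,w)|$ for \emph{that particular} $w$ is too weak, so instead observe that $v$ itself has index, say, $j_v$ in position $|p(\ell_G(v))|$ of its own vector with value $0$; if $j_v$ is \emph{also} in $p(\ell_G(u))$ — i.e. $u$ is a descendant of $v$ — then the position-$j_v$ entries give $t\ge d_G(u,v)$ directly as in \cref{lem:dvlsug}; symmetrically if $v$ is a descendant of $u$; and in the remaining case one passes to the last common ancestor $w$, uses that any shortest $u$--$v$ path in $G$ decomposes through $B_w$ so that $d_G(u,v)=d_G(u,w)+d_G(w,v)$, and then bounds $t$ by summing \emph{two} telescoping estimates — one from $a_0$ to the first $a_i$ realizing the ``turn'' at $w$, one from there to $a_t$ — to get $t\ge d_G(u,w)+d_G(w,v)=d_G(u,v)$. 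I expect verifying that a shortest $H$-path can be split at a vertex whose ancestor vector has length exactly $|p(w)|$, legitimizing this two-piece telescoping, to be the fiddly part; everything else is bookkeeping with prefixes and the triangle inequality. Combining the two inequalities gives $d_H(\ell_G(u),\ell_G(v))=d_G(u,v)$, so $\ell_G$ is an isometric embedding and $H$ is the desired isometric-universal graph on $2^{k+1}-1$ vertices.
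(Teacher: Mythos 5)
Your construction of $H$ and the easy direction ($d_H\le d_G$) match the paper's proof, but the hard direction has a genuine gap, and you have in fact sensed where it is. The missing idea is a restriction on the \emph{vertex set} of $H$: the paper only admits those labels $z\in\{0,1\}^{\le k}$ for which $D(z)=(p,x)$ is defined \emph{and the final entry of $x$ is $0$} (a condition satisfied by every genuine label $\ell_G(v)$, since $v$ is the last vertex in the natural ordering of its own ancestors and $d_G(v,v)=0$). Without this restriction your graph contains ``junk'' labels, and these can create shortcuts. For instance, if $G$ consists of two paths of length $5$ from $u$ and $v$ meeting at a common ancestor $w$, a label decoding to $(p,x)$ where $p$ is the common prefix of $p_u,p_v$ and $x$ agrees with the common restriction of $x_u$ and $x_v$ except for a single entry off by $1$ would be adjacent to both $\ell(u)$ and $\ell(v)$, giving $d_H\le 2$ while $d_G(u,v)=10$. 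Since the lemma must hold for every decoder $D$, and nothing prevents $D$ from emitting such values on unused binary strings, this breaks the construction as you state it rather than merely leaving a hole in the argument.

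Once the vertex set is restricted, the lower bound goes through cleanly, without the case analysis and without the claims about shortest paths in $G$ that you were reaching for. Take a shortest path $z_0,\dots,z_t$ in $H$ from $\ell(u)$ to $\ell(v)$ and let $j$ minimize $|p_j|$; prefix-compatibility along edges forces $p_j$ to be a common prefix of every $p_i$, so the coordinate at position $|p_j|$ exists everywhere, and telescoping gives $\|x_0'-x_j\|_\infty\le j$ and $\|x_j-x_t'\|_\infty\le t-j$ (primes denoting truncation to length $|p_j|$). Let $w$ be the $|p_j|$-th ancestor of $u$, hence also of $v$. The position-$|p_j|$ entries of $x_0$ and $x_t$ are $d_G(u,w)$ and $d_G(v,w)$, and---this is exactly where the vertex-set restriction is used---the position-$|p_j|$ entry of $x_j$ is its \emph{last} entry, hence $0$. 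Therefore $j\ge d_G(u,w)$ and $t-j\ge d_G(w,v)$, and the plain triangle inequality $d_G(u,v)\le d_G(u,w)+d_G(w,v)\le t$ finishes. No statement of the form ``$d_G(u,v)=d_G(u,w)+d_G(w,v)$'' or ``every shortest $u$--$v$ path meets $B_w$'' is needed (the former is false in general for the last common ancestor, and the latter only holds for the union of common ancestor bags, not for a specific one): the two telescoping halves each give a lower bound, so the triangle inequality in the easy direction suffices.
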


\begin{proof}
Let $D$ denote a hierarchical distance-vector labelling scheme for $\mathcal{C}$, with labels of at most $k$ bits, and let $(\ell_G)_{G\in \mathcal{C}}$ denote the associated labelling functions and $(T_G,(B_t)_{t\in V(T_G)})_{G\in \mathcal{C}}$ the associated hierarchical decompositions.
 
We define a graph $H$ whose vertex set consists of all $z\in \{0,1\}^{\le k}$,  such that $D(z)=(p,x)$ exists, and the final entry of $x$ is a 0.

%where 
%\begin{itemize}
%    \item $p\in \{1,\ldots,n\}^n$ is a binary vector with $|p|\le h(n)-1$, and
%    \item $x\in \{1,\ldots,n\}^n$ is a binary vector with $|x|\le f(n)$, 
%\end{itemize}
%Moreover we require that the distance-vector $D(x)$ contains a single 0, located among the last $s_{|p|+1}$ entries of $D(x)$ (where $s_{|p|+1}$ is the last entry of $s$). 

%At this point, it might be useful to give some intuition about the meaning of each of the parts $p,x,s$: as we will see below, $p$ will record the position of a vertex in the tree $T_G$; $D(x)$ a vector of distances from a vertex to all the vertices in its ancestor bags; and $s$ will record the sizes of each of the ancestor bags (while $s_{|p|+1}$ records the size of the bag in which the vertex lies).

\medskip

The number of vertices in $H$ is at most $2^{k+1}-1$. We define adjacency in $H$ as follows: let $z_1,z_2$ be two vertices of $H$ and let us denote $D(z_1)=(p_1,x_1)$ and $D(z_2)=(p_2,x_2)$. Then $z_1$ is adjacent to $z_2$ in $H$ if and only if 
\begin{itemize}
    \item one of $p_1,p_2$ is a prefix of the other, and
    \item $x_1$ is at $L^\infty$-pseudodistance 1 from $x_2$.
\end{itemize}
We now prove that $H$ is isometric-universal for $\mathcal{C}$. Consider some graph $G\in \mathcal{C}$, and let $v_1,\ldots,v_n$ be the ordering of the vertices of $G$ associated to the decoding function $D$.  We write $T=T_G$ for the rooted tree in the hierarchical decomposition of $G$ associated to $D$, and $\ell=\ell_G$ for the labelling function. Given a vertex $v\in V(G)$, we map $v$ to $\ell(v)$ in $H$. Note that $|\ell(v)|\in \{0,1\}^{\le k}$ and that $D(\ell(v))$ is defined. Moreover, if we write $D(\ell(v))=(p_v,x_v)$, then since $v$ is the final vertex in the natural ordering of its ancestors, the final entry of $x_v$ is equal to $d_G(v,v)=0$. This shows that $\ell(v)$ is indeed a vertex of $H$.
It remains to prove that this gives an isometric embedding of $G$ in $H$. 

Let $u,v\in V(G)$. We write $D(\ell(u))=(p_u,x_u)$ and
$D(\ell(u))=(p_v,x_v)$. If $uv\in E(G)$, then $u\ne v$ and we may
assume that $u$ is an ancestor of $v$ (since $(T,(B_t)_{t\in V(T)})$
is a hierarchical decomposition of $G$, one of $u,v$ is an ancestor of the other). This implies that $p_u$ is a prefix of $p_v$. Note that $x_u$ is a vector recording the distance from $u$ to each ancestor of $u$, and the prefix of $x_v$ of size $|p_u|=|x_u|$ records the distance between $v$ and the same vertices, in the same order. Since $uv\in E(G)$, it follows from the triangle inequality that for each vertex $w$ in the sequence, $|d_G(u,w)-d_G(v,w)|\le 1$, and thus the two vectors $x_u$ and $x_v$ are at $L^\infty$-pseudodistance at most $1$. Moreover $d_G(u,u)=0$ while $d_G(v,u)=1$, so the two vectors are at $L^\infty$-pseudodistance exactly $1$. 

This shows that $G$ embeds as a subgraph of $H$ via the mapping $u\mapsto \ell(u)$, and thus
\[
d_H\big(\ell(u),\ell(v)\big)\le d_G(u,v)
\]
for any $u,v\in V(G)$. In the remainder of the proof, we show that for all $u,v\in V(G)$, any path between $\ell(u)$ and $\ell(v)$ in $H$ has length at least $d_G(u,v)$,
which implies that $G$ is an isometric subgraph of $H$.

\medskip

First consider a shortest path $z_0,z_1\ldots,z_t$ in $H$, and write
$D(z_i)=(p_i,x_i)$ for any $0\le i \le t$. We first consider the
special situation in which for each $i\ge 0$, $p_0$ is a prefix of
$p_i$. For any $0\le i \le t$, we write $x_i'$ for the prefix of $x_i$
of length $|x_0|$. Note that for any $1\le i \le t$,
$\|x_{i-1}'-x_i'\|_\infty\le 1$ by the definition of $H$, and thus it follows from the triangle inequality that 
\[
\|x_0-x_t'\|_\infty\le \sum_{i=1}^t \|x_{i-1}'-x_i'\|_\infty \le t %=d_H\big((p_0,x_0,s_0),(p_k,x_k,s_k)\big).
\]
We now consider a shortest path $P=z_0,z_1,\ldots,z_t$ in $H$ between $z_0=\ell(u)$ and $z_t=\ell(v)$ for vertices $u$ and $v$ in some graph $G$. We again write $D(z_i)=(p_i,x_i)$ for any $0\le i \le t$. Let $j\in \{0,\ldots,t \}$ be such that $|p_j|=|x_j|$ is minimal. Since for any $1\le i \le t$, one of $p_i,p_{i-1}$ 
is a prefix of the other, it follows that $p_j$ is a common prefix of all $p_i$, for $0\le i \le t$. For any $0\le i \le t$, we write $x_i'$ for the prefix of $x_i$ of length $|x_j|$. 
By the paragraph above, we obtain that 
\[
\|x_0'-x_j\|_\infty\le  j \text{ and } \|x_j-x_t'\|_\infty\le  t-j, 
\]
Let $w$ be the $|p_j|$-th ancestor of $u$ (in the natural ordering of the ancestors of $u$). By transitivity, since prefixes of $p_0,p_1,\ldots,p_t$ of size $|p_j|$ coincide along the edges of the path, $w$ is also the $|p_j|$-th ancestor of $v$. It follows that the $|p_j|$-th entry in $x_0$ (and $x_0'$) is equal to $d_G(u,w)$, and the $|p_j|$-th entry in $x_t$ (and $x_t'$) is equal to $d_G(v,w)$. By definition of the vertex set of $H$, since $z_j\in V(H)$ and $D(z_j)=(p_j,x_j)$, it follows that the $|p_j|$-th entry of $x_j$ is equal to $0$. This implies that $\|x_0'-x_j\|_\infty \geq |d_G(u,w)-0|=d_G(u,w)$ and similarly $\|x_j-x_t'\|_\infty \geq d_G(w,v)$. As a consequence, 
\begin{align*}
d_G(u,v)\le d_G(u,w)+d_G(v,w) & \le \|x_0'-x_j\|_\infty+\|x_j-x_t'\|_\infty\\
& \le j+t-j=t.
\end{align*}
This shows that $t=d_H\big(\ell(u),\ell(v)\big)\ge d_G(u,v)$, as desired.
 \end{proof}
 
 We now explain how to obtain a hierarchical distance-vector labelling scheme for $\mathcal{G}_n$ with labels of size roughly $\log 3\cdot n$. We will need the following lemma, proved in \cite[Section 4]{alstrup.gavoille.ea:simpler} using classical tools from \cite{ST83}, and which is the main technical ingredient for the construction of a distance labelling scheme with labels of at most $(\tfrac12 \log 3+o(1)) n$ bits in \cite{alstrup.gavoille.ea:simpler}.

 \begin{lemma}[\cite{alstrup.gavoille.ea:simpler}]\label{lem:heavy}
 For any rooted tree $T$, there is a (non necessarily proper) 2-coloring of the vertices of $T$ with colors red and blue, and an ordering $v_1,\ldots,v_n$ of the vertices of $T$ such that the following holds
 \begin{enumerate}
 \item $v_1$ is the root of $T$, and is colored blue
     \item each vertex has $O(\log n)$ blue  ancestors.
     \item for every red vertex $u$, the parent $v$ of $u$ appears directly before $u$ in the ordering: there is an integer $1\le i \le n-1$ such that $v=v_i$ and $u=v_{i+1}$.
 \end{enumerate}
 \end{lemma}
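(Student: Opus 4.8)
The plan is to prove \cref{lem:heavy} via the classical \emph{heavy path decomposition} of \citet{ST83}. First I would recall that decomposition: for each non-leaf node $t$ of $T$, pick among its children the one with the largest subtree (breaking ties arbitrarily) and call the edge to it a \emph{heavy edge}; the remaining child-edges are \emph{light}. The heavy edges partition $V(T)$ into vertical paths (\emph{heavy paths}), each running from some node down to a leaf. The key quantitative fact is that on any root-to-node path in $T$, one traverses at most $\lfloor \log_2 n\rfloor$ light edges, because each time a light edge is traversed the size of the current subtree drops by more than a factor of $2$.

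Next I would define the coloring. Color a vertex $u$ \emph{red} if the edge from its parent to $u$ is a heavy edge, and \emph{blue} otherwise (so the root is blue, giving item (1), after we also fix its place in the ordering). Then the blue ancestors of a vertex $v$ are exactly the top endpoints of the heavy paths met along the root-to-$v$ walk, so their number is at most one more than the number of light edges on that walk, hence $O(\log n)$; this gives item (2). For the ordering, I would process the heavy paths so that each heavy path's vertices appear consecutively and in top-to-bottom order, and so that a heavy path is listed immediately after the vertex that is its light-parent. Concretely, one can take a depth-first traversal of $T$ that at each node recurses into the heavy child \emph{last}: then whenever $u$ is red (i.e.\ the heavy child of its parent $v$), $u$ is visited immediately after $v$, giving item (3). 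A clean way to phrase this is by induction on $|T|$: order the root $v_1$ first, then for each light child $c$ of the root in turn append the ordering of $T_c$ obtained recursively, and finally append the ordering of the subtree hanging from the heavy child (again recursively); one checks the three properties are preserved.

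I expect the main obstacle to be purely bookkeeping: verifying that the recursive concatenation of orderings simultaneously respects all three conditions, in particular that the ``parent directly before child'' condition for red vertices is not broken at the seams between the blocks (it is not, precisely because the heavy child's block is appended last and directly after its parent's block, whose first element is the parent; and within a recursively produced block the property holds by induction). One should also double-check the edge cases (leaves, the root having no heavy child) and confirm the $O(\log n)$ bound on blue ancestors is uniform over all vertices. None of this requires anything beyond \cite{ST83}, so the proof in the paper presumably just cites that lemma and records how the coloring and ordering are read off from the heavy path decomposition.
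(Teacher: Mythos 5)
The paper does not actually prove \cref{lem:heavy}; it imports it from \cite{alstrup.gavoille.ea:simpler}, where it is established exactly via the heavy-path machinery of \cite{ST83} that you invoke. So your choice of tool is the intended one, and your coloring (red $=$ heavy child, blue $=$ root and light children) together with the light-edge counting argument for item (2) is correct.

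However, your concrete construction of the ordering is backwards, and as stated it violates item (3). You propose a DFS that ``at each node recurses into the heavy child \emph{last}'', i.e.\ you list the root, then the full orderings of all light subtrees, and only then the block of the heavy child's subtree. Under that ordering the heavy child $u$ of a node $v$ is \emph{not} visited immediately after $v$ whenever $v$ has a light child with a nonempty subtree: all of those light-subtree vertices are interposed between $v$ and $u$. The fix is simply to recurse into the heavy child \emph{first}: output $v$, then recurse into the heavy child (so the red child lands directly after its parent, and inductively each heavy path is listed consecutively top-to-bottom), then recurse into the light children in any order. With that correction items (1)--(3) all go through. A second, smaller slip: your requirement that ``a heavy path is listed immediately after the vertex that is its light-parent'' is neither needed for item (3) (only consecutiveness of parent and heavy child along a heavy path matters) nor achievable in general, since a vertex may have several light children and at most one of their heavy paths can immediately follow it.
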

 
Note that a consequence of \cref{lem:heavy} is that for any vertex $v$
in $T$, the path from the root to $v$ can be divided into $O(\log n)$
subpaths, each containing at most one blue vertex, and such that any
two adjacent vertices in any of these subpaths are consecutive in the ordering. 

\begin{theorem}\label{thm:log3n}
For any $n\ge 1$, the class of all connected $n$-vertex graphs  has a hierarchical distance-vector labelling scheme with labels of at most $n \cdot \log 3 +O(\log^2 n)$ bits.
\end{theorem}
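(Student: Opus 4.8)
The plan is to build a hierarchical distance-vector labelling scheme for a connected $n$-vertex graph $G$ by combining the shortest-path-tree / separator machinery of \cref{lem:heavy} with the ``unary encoding of distance differences along a path'' idea already used in the proof of \cref{thm:4n}. First I would fix a BFS tree $T$ of $G$ rooted at an arbitrary vertex $r=v_1$; this is exactly the hierarchical decomposition I will use, with each bag a singleton, so that the ancestors of $v$ in the sense of \cref{sec:proof2} are precisely the vertices on the root-to-$v$ path in $T$ (ordered from the root down to $v$). Apply \cref{lem:heavy} to $T$ to obtain the red/blue $2$-colouring and the vertex ordering $v_1,\dots,v_n$; I will use this same ordering as the $V(G)$-ordering required by the definition of a hierarchical distance-vector labelling scheme. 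The label of a vertex $v$ must then encode, in order, the $V(G)$-indices of all ancestors of $v$ together with the distances $d_G(v,w)$ for each such ancestor $w$.

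The indices part $p(v)$ is cheap: by property (3) of \cref{lem:heavy} the root-to-$v$ path in $T$ splits into $O(\log n)$ maximal ``red runs'', each of which is a block of consecutive vertices in the ordering (a blue vertex followed by a run of red descendants each of which directly follows its parent). Hence $p(v)$ is determined by the $O(\log n)$ starting indices of these runs together with their lengths, which costs $O(\log^2 n)$ bits in total. The expensive part is $x(v)$, the vector of distances to the ancestors. The key observation is the same telescoping trick as in \cref{thm:4n}: instead of storing each $d_G(v,w)$ explicitly, for consecutive ancestors $w_{i-1},w_i$ on the path I store the sign of $\delta_i = d_G(v,w_i)-d_G(v,w_{i-1})$ plus $|\delta_i|$ in unary. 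Since $w_{i-1}$ and $w_i$ are at distance $1$ in $G$ precisely when they are consecutive in the ordering (which is the case \emph{except} at the $O(\log n)$ ``boundaries'' between runs, where I can afford to spend $O(\log n)$ bits writing the difference explicitly), the triangle inequality gives $|\delta_i|\le 1$ inside each run and $|\delta_i|\le d_G(w_{i-1},w_i)$ in general. Summing over the whole path, $\sum_i |\delta_i| \le \sum_i d_G(w_{i-1},w_i)$, and here I need a bound on the total length of the root-to-$v$ path measured with $G$-distances between consecutive ancestors. Because $T$ is a BFS tree, each edge of $T$ joins vertices whose $G$-distances to the root differ by exactly $1$, so within a red run (whose vertices are consecutive on a root path) consecutive $G$-distances between ancestors are $1$; across a boundary the jump is at most the depth of the relevant subtree, and there are only $O(\log n)$ boundaries, contributing $O(\log n)\cdot\mathrm{(depth)}=O(n\log n)$ in the worst case — but I should instead bound the \emph{sum} of these boundary jumps by the total path length, which is at most $n$ since it is a path in $G$ on at most $n$ vertices. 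Thus $\sum_i|\delta_i|\le n$, the unary encoding of the absolute values costs at most $2n$ bits via the standard $1$-then-$b_i$-zeros bijection, the $n-1$ sign bits cost $n$ bits, giving a total of about $3n + O(\log^2 n)$ — wait, that is $3n$, not $\log 3\cdot n$.

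To squeeze the bound down to $\log 3 \cdot n$ I would replace the ``$n$ sign bits plus unary absolute values'' encoding by a single unified encoding over the alphabet $\{+1,-1,0\}$, read off the path: the sequence of steps $\delta_2,\dots,\delta_n$ (inside runs) is a word of length $\le n$ over a ternary alphabet, together with its $O(\log n)$ run-boundary corrections, which encodes in $n\log 3 + O(\log^2 n)$ bits. That is, the crucial point is that once we know $|\delta_i|\le 1$ for all but $O(\log n)$ indices, each such $\delta_i\in\{-1,0,+1\}$ carries exactly $\log 3$ bits of information, and concatenating $n-1$ of them together with the cheap index data and the boundary corrections and the single value $d_G(v,v_1)\le n$ (costing $O(\log n)$ bits) gives the claimed bound. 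I would then simply check that $D$ can decode: from the label we recover the run structure and indices (hence $p(v)$), then starting from $d_G(v,v_1)$ we walk along the ternary word adding $\delta_i$ at each step and inserting the stored boundary values at run boundaries, reconstructing $d_G(v,w)$ for every ancestor $w$ (hence $x(v)$), and the final entry is $d_G(v,v)=0$ as required. The main obstacle is the distance bookkeeping at the $O(\log n)$ boundaries between red runs: one must argue that the blue vertices, which are the only places the ``consecutive in the ordering implies adjacent in $G$'' property can fail, are few enough (property (2)) that spending $O(\log n)$ bits per boundary is affordable, and simultaneously that the sum of the $G$-distance jumps at boundaries does not blow up the unary budget — which follows because all these jumps lie along a single path of $G$ and hence sum to at most $n$.
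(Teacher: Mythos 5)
There is a genuine gap, and it is the very first step: you take $T$ to be a \emph{BFS} tree and declare that $T$ with singleton bags is a hierarchical decomposition of $G$. It is not. A hierarchical decomposition requires that \emph{every} edge of $G$ join a vertex to one of its ancestors, and BFS trees have cross edges violating this: in the $4$-cycle $v_1v_2v_3v_4$ rooted at $v_1$, the BFS tree makes $v_2$ and $v_4$ children of $v_1$ and, say, $v_3$ a child of $v_2$, so the edge $v_3v_4$ joins two incomparable vertices. Consequently your labels do not constitute a hierarchical distance-vector labelling scheme as defined (the ancestors of $v_3$ do not include $v_4$), and the downstream application of \cref{lem:td2} would break exactly where it uses the decomposition property: for an edge $uv$ it needs one of $p_u,p_v$ to be a prefix of the other in order to make $\ell(u)\ell(v)$ an edge of $H$. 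The fix is to take $T$ to be a \emph{DFS} tree, whose defining property is precisely that every non-tree edge of $G$ is a back edge, i.e.\ joins a vertex to an ancestor; this is what the paper does. (The BFS property you invoke --- tree edges join vertices whose distances to the root differ by one --- plays no role anywhere.)

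A secondary confusion, which does not kill the argument but makes it needlessly complicated: the entries of $x(v)$ are distances to \emph{consecutive vertices on the root-to-$v$ tree path}, and any two such consecutive ancestors are parent and child in $T$, hence adjacent in $G$. So $|\delta_i|=|d_G(v,t_i)-d_G(v,t_{i-1})|\le 1$ for \emph{every} $i$ by the triangle inequality --- there are no ``boundaries'' at which this fails and no corrections to store. You conflate ``consecutive in the ordering of \cref{lem:heavy}'' with ``consecutive on the tree path''; the run structure from \cref{lem:heavy} is needed only to encode the index vector $p(v)$ in $O(\log^2 n)$ bits, and has nothing to do with bounding the $\delta_i$. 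Once the tree is a DFS tree, your final encoding (one explicit value $d_G(v,r)$ plus a ternary word of length at most $n$, costing $n\log 3+O(\log n)$ bits, plus $O(\log^2 n)$ bits for $p(v)$) is exactly the paper's and the theorem follows.
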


\begin{proof}
Let $n\ge 1$ and let $G$ be an $n$-vertex connected graph. Let $T$ be a Depth-First-Search spanning tree of $G$, with root $r$. It is well known that any edge $uv$ in $G$ connects a vertex to one of its ancestors in $T$. So if we define $B_v=\{v\}$ for any vertex $v\in V(G)$, then we obtain that $(T,(B_t)_{t\in V(T)})$ is a hierarchical decomposition of $G$. 

Apply \cref{lem:heavy} to $T$, and let $v_1,\ldots,v_n$ be the
corresponding ordering of the vertices of $T$ (and thus $G$). Let $v$
a vertex of $G$, and let $P=t_1,\ldots,t_k$ be the unique path from
$t_1=r$ to $t_k=v$ in $T$.  Note that the vertices $t_1,\ldots,t_k$
are the ancestors of $v$ not only in $T$, but also in $G$ (with
respect to the hierarchical decomposition $(T,(B_t)_{t\in V(T)})$ and
the ordering $v_1,\ldots,v_n$), and the natural ordering of these
ancestors of $v$ is precisely $t_1,\ldots,t_k$. Let $p(v)\in
\{1,\ldots,n\}^k$ be the vector in which for any $1\le i \le k$, the
$i$-th entry (denoted by $p(v)_i$) is the $V(G)$-index of $t_i$. By
\cref{lem:heavy}, the path $P$ is divided into $O(\log n)$ subpaths in
which all $V(G)$-indices are consecutive. In order to store $p(v)$,
it suffices to store the $V(G)$-indices of the $O(\log n)$ endpoints
of these subpaths. (We allocate a fixed number of bits for this for fixed $n$, so the number of subpaths
does not need to be stored explicitly). It follows that $p(v)$ can be encoded with $O(\log^2 n)$ bits.

Let $x(v)\in \{0,\ldots,n\}^k$ be the vector in which for any $1\le i
\le k$, the $i$-th entry is equal to $d_G(v,t_i)$. To store $x(v)$, we
record the distance $d_G(v,r)=d_G(v,t_1)$ explicitly, using $O(\log
n)$ bits, and for each $2\le i\le k$ we store
$\delta_i=d_G(v,t_i)-d_G(v,t_{i-1})\in \{-1,0,1\}$. As $k\le n$, this
can be recorded in $n \cdot \log 3+O(\log n)$ bits in total. It
follows that the  class of all connected $n$-vertex graphs  has a hierarchical distance-vector labelling scheme with labels of at most $n \cdot \log 3 +O(\log^2 n)$ bits.
\end{proof}

It should be noted that even if we use the same technical tool as the proof of the distance labelling scheme with labels of $(\tfrac12 \log 3+o(1)) n$ bits in \cite{alstrup.gavoille.ea:simpler}, our proof here is quite different. In particular, if $\mathcal{P}_n^k$ is the class of all $n$-vertex graphs with no path of length more than $k$, for some integer $k=k(n)$, then the proof above gives a distance-vector labelling scheme for $\mathcal{P}_n^k$ with labels of at most $k\cdot \log 3 +O(\log^2 n)$ bits, as any Depth-First-Search tree in such a graph has height at most $k$. However, in \cite{alstrup.gavoille.ea:simpler} the bound on the height of the tree does not affect the leading term of the label size of a vertex $v$, which is caused by storing $d_G(v,x)-d_G(v,\text{parent}(x))\in \{-1,0,1\}$ for about $n/2$ vertices $x$.

\medskip

With \cref{thm:log3n} and \cref{lem:td2} in hand, we are now ready to prove \cref{thm:main2}.

\begin{proof}[Proof of \cref{thm:main2}]
Let $n\ge 1$ be an integer.  \cref{thm:log3n} and \cref{lem:td2} imply that the class of
  connected $n$-vertex graphs has an isometric-universal graph
  $G_n$ with at most $2^{n\log 3 +O(\log^2 n)}$ vertices. Since any connected graph of at most $n$ vertices is an isometric subgraph
of some connected $n$-vertex graph, $G_n$ is isometric-universal for
the class of connected graph with at most $n$ vertices. By
\cref{obs:connected}, this shows that the class $\mathcal{G}_n$
has an isometric-universal graph with at most $n\cdot 2^{n\log 3 +O(\log^2 n)}=3^{n+O(\log^2 n)}$
vertices, as desired.
\end{proof}

The generality of hierarchical distance-vector labelling schemes allows us to also derive good bounds on the size of isometric-universal graphs for classes with small separators, as we now explain.

\medskip

A vertex set $S$ in an $n$-vertex graph $G$ is said to be a \emph{balanced separator} if $V(G)-S$ can be partitioned into two sets $X,Y$, each of size at most $2n/3$, such that no edge of $G$ has one endpoint in $X$ and the other in $Y$. It is well known that every tree has a balanced separator consisting of a single vertex (see for instance~\cite{Chu90}), and more generally every graph of bounded treewidth has a balanced separator of constant size. The planar separator theorem of Lipton and Tarjan~\cite{LT79} states that $n$-vertex planar graphs have balanced separators of size $O(\sqrt{n})$, and it was proved that the same holds for any proper minor-closed class~\cite{AST90}.

\medskip

In the remainder of the paper it will be convenient to assume that $\mathcal{C}$ is a \emph{hereditary} class of graphs, that is every induced subgraph of a graph of $\mathcal{C}$ is also in $\mathcal{C}$. Note that
%if $\mathcal{C}$ has balanced separators of size $f(n)$, or balanced $\mathcal{C}$-separators, 
we can decompose any graph $G\in \mathcal{C}$ by constructing some (binary) rooted tree $T_G$ and some partition $(B_t)_{t\in V(T_G)}$ of $V(G)$ inductively as follows. Let $S$ be a non-empty\footnote{Note that any empty balanced separator in an non-empty graph can be converted to a non-empty separator by adding an arbitrary vertex to the separator.} balanced separator of $G$, and let $X,Y$ be a partition of $V(G)-S$ into two sets of at most two thirds of the vertices, with no edges between $X$ and $Y$. Inductively, we construct rooted trees $T_1$ and $T_2$ for $G_1=G[X]$ and $G_2=G[Y]$ respectively, as well as corresponding partitions $(B_t)_{t\in V(T_1)}$ of $X$ and $(B_t)_{t\in V(T_2)}$ of $Y$. We add a root $r$, set $B_r=S$ and then define $T_G$ as the tree with root $r$ having at most two children $t_1$ and $t_2$, so that the subtree rooted in $t_i$ is equal to $T_i$ for $i=1,2$ (note that a vertex $t_i$ does not exist if $T_i$ and the corresponding subgraph of $G$ are empty). It follows from the inductive construction that $(B_t)_{t\in T_G}$ is indeed a partition of $G$. Note that by the definition of separators, for any edge $uv\in E(G)$, $u$ is in some ancestor bag of $v$, or vice-versa. This shows that the pair $(T_G,(B_t)_{t\in V(T_G)})$ constructed in this way is a hierarchical decomposition of $G$. 

\medskip

Given
a class $\mathcal{C}$, we denote by $\mathcal{C}_n$ the class of
$n$-vertex graphs of $\mathcal{C}$. 
We say that a graph class $\mathcal{C}$ has \emph{balanced separators of size at most $f(n)$}, for some nondecreasing function $f:\mathbb{N}\to \mathbb{N}$, if for any $n\ge 1$, any graph $G\in \mathcal{C}_n$ has a balanced separator of size at most $f(n)$. 

\begin{theorem}\label{thm:sep}
Let $\mathcal{C}$ be a hereditary class with balanced separators of size at most $f(n)$. Then for any integer $n\ge 1$, the class $\mathcal{C}_n$ has an isometric-universal graph with at most $2^{O(f(n)\cdot \log^2 n)}$ vertices. 
\end{theorem}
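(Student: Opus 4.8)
The plan is to combine \cref{lem:td2} with the recursive separator decomposition described immediately before the statement of \cref{thm:sep}. By \cref{lem:td2}, it suffices to exhibit, for the hereditary class $\mathcal{C}$, a hierarchical distance-vector labelling scheme for $\mathcal{C}_n$ with labels of at most $O(f(n)\cdot \log^2 n)$ bits; the bound $2^{O(f(n)\cdot\log^2 n)}$ on the number of vertices of the isometric-universal graph then follows immediately since $2^{k+1}-1 = 2^{O(f(n)\cdot\log^2 n)}$ when $k = O(f(n)\cdot\log^2 n)$.

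First I would fix $G\in\mathcal{C}_n$ and build the hierarchical decomposition $(T_G,(B_t)_{t\in V(T_G)})$ exactly as in the paragraph preceding the theorem: repeatedly pick a non-empty balanced separator, split off the two sides (which are \emph{induced} subgraphs of $G$, hence again in $\mathcal{C}$ by heredity, and of size at most $2n/3$), and recurse. Two structural facts are then immediate. Since $f$ is nondecreasing and each bag is a balanced separator of a subgraph of $G$ on at most $n$ vertices, every bag has size at most $f(n)$ (bags reduced to a single vertex by the non-emptiness adjustment also satisfy this, as $f(n)\ge 1$). Since the number of vertices drops by a factor of at least $3/2$ from a node of $T_G$ to any of its children, $T_G$ has depth $O(\log n)$. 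Now fix an arbitrary ordering $v_1,\dots,v_n$ of $V(G)$ — any ordering will do, since for each vertex the natural ordering of its ancestors is determined internally by the decomposition and this vertex ordering, namely by concatenating the ancestor bags from the root and sorting within each bag by $V(G)$-index (with the usual truncation inside the bag of $v$).

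The label of a vertex $v$ then encodes the two vectors required by the scheme: $p(v)$, the vector of $V(G)$-indices of the ancestors of $v$ in their natural order, and $x(v)$, the corresponding vector of distances $d_G(v,\cdot)$. All ancestors of $v$ lie in the $k = O(\log n)$ ancestor bags of $v$, each of size at most $f(n)$, so (being distinct vertices) there are at most $\min(n, k\,f(n)) = O(f(n)\log n)$ of them, i.e. $|p(v)| = |x(v)| = O(f(n)\log n)$. Encoding each entry of $p(v)$ (an integer in $\{1,\dots,n\}$) and of $x(v)$ (an integer in $\{0,\dots,n-1\}$) naively with $O(\log n)$ bits in fixed-width fields — legitimate because $n$ is fixed, so the number of entries need not be stored — the whole label fits in $O(f(n)\cdot\log^2 n)$ bits, and the decoding function $D$ is simply the (uniform) parser recovering $(p(v),x(v))$ from these fields. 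This is a hierarchical distance-vector labelling scheme for $\mathcal{C}_n$ with labels of at most $O(f(n)\cdot\log^2 n)$ bits, so \cref{lem:td2} finishes the proof. (In particular, with $f(n) = O(\sqrt n)$ this yields isometric-universal graphs on $2^{O(\sqrt n\,\log^2 n)}$ vertices for planar graphs, and more generally for any proper minor-closed class.)

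The only genuinely delicate point — and the natural candidate for the main obstacle — is checking that the recursively built $(T_G,(B_t))$ really is a hierarchical decomposition and that $D$ can be defined uniformly over the whole class, independently of $G$. Both are handled by the construction preceding the theorem: the defining property of a balanced separator is precisely that every edge of $G$ joins a bag to one of its ancestor bags, which is exactly the definition of a hierarchical decomposition; and heredity guarantees that at every recursive step the relevant induced subgraph still lies in $\mathcal{C}$ and hence still admits a balanced separator of size at most $f(n)$, so the bag-size and depth bounds above are uniform in $G$. Everything else is the routine bit-counting described above.
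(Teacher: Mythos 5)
Your proof is correct and follows essentially the same route as the paper: build the recursive balanced-separator decomposition (using heredity and monotonicity of $f$ to bound each bag by $f(n)$ and the depth by $O(\log n)$), encode for each vertex the indices of and distances to its $O(f(n)\log n)$ ancestors in $O(f(n)\log^2 n)$ bits, and apply \cref{lem:td2}. The extra details you supply (non-emptiness of separators, uniformity of the decoder) are consistent with the paper's construction.
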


\begin{proof}
Given any graph $G\in \mathcal{C}_n$, let $(T_G,(B_t)_{t\in T_G})$ be a hierarchical  decomposition of $G$ obtained as above, by taking only balanced separators of size at most $f(n)$. Note that by the definition of balanced separators, the height of $T_G$ is  $O(\log n)$. Consider any ordering $v_1,\ldots,v_n$ of the vertices of $G$. For each vertex $v\in V(G)$, we store the $V(G)$-indices of the ancestors of $v$ and the distances from $v$ to these vertices. Note that $v$ has $O(\log n)$ ancestor bags and each contains at most $f(n)$ vertices, so we only need to store $O(f(n)\cdot \log n)$ indices and distances (which are elements of $\{0,\ldots,n\}$, so this takes at most $O(f(n)\cdot \log^2 n)$ bits per vertex).

This gives a hierarchical distance-vector labelling scheme for $\mathcal{C}_n$, with labels of at most $ O(f(n)\cdot \log^2 n)$ bits. By \cref{lem:td2}, this implies that $\mathcal{C}_n$ has an isometric-universal graph with at most $2^{O(f(n)\cdot \log^2 n)}$ vertices, as desired.
\end{proof}

When the separator size $f(n)$ is at least $n^\epsilon$, for some $\epsilon>0$, a multiplicative factor of $\log n$ can be avoided in the exponent by observing that the size of the bags decreases geometrically with the depth in the tree, so each vertex only needs to store distances to $O(f(n))$  ancestors in this case. Using the separator theorem from~\cite{AST90}, this shows that $n$-vertex graphs from any proper minor-closed class have an isometric-universal graph with at most $2^{O(\sqrt{n} \log n)}$ vertices. It is possible to avoid another multiplicative factor of $\log n$ in the exponent in the case of planar graphs, using the ideas of \cite{GU16}, which leads to an isometric-universal graph with at most $2^{O(\sqrt{n})}$ vertices for this class. Using \cref{lem:distknr}, this shows that the best known bounds on distance-labelling schemes for classes with small separators can be obtained from isometric-universal graphs. Since any distance labelling scheme for the class of $n$-vertex planar graphs requires labels of $\Omega(n^{1/3})$ bits~\cite{GPPR04}, \cref{lem:distknr} also shows that any isometric-universal graph for the class of $n$-vertex planar graphs needs $2^{\Omega(n^{1/3})}$ vertices.

\section{Conclusion}\label{sec:ccl}

A natural problem is to find the smallest constant $c$ such that $\mathcal{G}_n$ has an isometric-universal graph on at most $2^{cn}$ vertices. It is possible that $c=\tfrac12+o(1)$, but currently any improvement over the best known constant for distance labelling scheme from~\cite{alstrup.gavoille.ea:simpler}, that is proving that $c<\tfrac12 \log(3)$, would already be significant. As we have mentioned in the introduction, almost all $n$-vertex graphs have diameter 2, so it follows that almost all $n$-vertex graphs embed isometrically in any induced-universal graph for $\mathcal{G}_n$ (with at most $2^{n/2}$ vertices). As a consequence, we only need to consider a vanishing proportion of the graphs in $\mathcal{G}_n$.

\medskip

It was mentioned in the previous section that for the class of $n$-vertex planar graphs, any isometric-universal graph needs at least $2^{\Omega(n^{1/3})}$ vertices. On the other hand, it was proved in \cite{AdjacencyLabellingPlanarFOCS} that the same class has an induced-universal graph with $n^{1+o(1)}$ vertices. So in general the minimum size of an isometric-universal graph for a class $\mathcal{C}$ can be very different from the minimum size of an induced-universal graph for $\mathcal{C}$. However, it might be possible that for dense hereditary classes (classes $\mathcal{C}$ such that $|\mathcal{C}_n|=2^{\Theta(n^2)}$) the two sizes coincide, up to lower order terms (see~\cite{BEGS20} for more on induced-universal graphs for dense hereditary classes). If true, this would in particular imply the existence of isometric-universal graphs for $\mathcal{G}_n$ with $2^{n/2+o(n)}$ vertices, and the existence of a distance labelling scheme for $\mathcal{G}_n$ with labels of at most $n/2+o(n)$ bits. 

\medskip

For distance-vector labelling schemes, which we introduce in this paper, it is possible that labels of $n+o(n)$ bits are sufficient for the class $\mathcal{G}_n$. Proving this would again improve on the best known distance labelling scheme for $\mathcal{G}_n$, but the unary nature of the problem seems to require new tools.

\medskip

Finally, we wonder whether the bound $2^{O(f(n)\log^2 n )}$ in \cref{thm:sep} can be replaced by $2^{O(f(n)+\log^2 n )}$. The motivation for this question is the following: on the one hand, we have seen that for planar graphs we can improve the bound of \cref{thm:sep} from $2^{O(\sqrt{n}\log^2 n)}$ to $2^{O(\sqrt{n})}$; on the other hand, it is known that for $n$-vertex trees (which admit balanced separators of size 1), the minimum size of the labels in a distance labelling scheme is  $(\tfrac14+o(1)) \log^2 n$ \cite{FGNW17} and the constant $\tfrac14$ is best possible \cite{alstrup.gortz.ea:distance}. This shows in particular that the $\log^2 n$ term cannot be avoided in \cref{thm:sep} and in a possible improvement with $2^{O(f(n)+\log^2 n )}$ vertices. The work on distance labelling in trees mentioned above also motivates the following natural question: What is the smallest constant $c>0$ such that the class of $n$-vertex trees has an isometric-universal graph with at most $2^{c \log ^2 n}$ vertices? The lower bound on distance labelling schemes in trees \cite{alstrup.gortz.ea:distance} shows that $c\ge \tfrac14$, while known upper bounds on the size of trees containing all $n$-vertex trees as subgraphs~\cite{GL68,CGC81} (and thus also as isometric subgraphs) show that $c\le \tfrac12+o(1)$.

\section*{acknowledgment}

We thank Peter Winkler for asking \cref{q:winkler}, and for all the
subsequent discussions.

\bibliographystyle{abbrv}
\bibliography{bibliography}
\end{document}